\documentclass{amsart}
\newtheorem{theorem}{Theorem}[section]
\newtheorem{proposition}[theorem]{Proposition}
\newtheorem{lemma}[theorem]{Lemma}

\theoremstyle{definition}
\newtheorem{definition}[theorem]{Definition}

\newtheorem{remark}[theorem]{Remark}
\newtheorem{example}[theorem]{Example}

\newcommand{\e}{\epsilon}
\newcommand{\A}{\mathcal{A}}
\newcommand{\B}{\mathcal{B}}
\newcommand{\CC}{\mathbb{C}} 
\newcommand{\NN}{\mathbb{N}} 
\newcommand{\QQ}{\mathbb{Q}} 
\newcommand{\RR}{\mathbb{R}} 
\newcommand{\ZZ}{\mathbb{Z}} 

\renewcommand{\H}{\mathcal{H}} 
\newcommand{\K}{\mathcal{K}} 
\newcommand{\D}{\mathcal{D}}
\newcommand{\M}{\mathcal{M}} 
\newcommand{\N}{\mathcal{N}} 

\newcommand{\R}{\mathcal{R}}
\newcommand{\T}{\mathcal{T}} 
\newcommand{\bh}{B(\H)} 
\newcommand{\alg}[1]{\mathop{Alg}(#1)}

\newcommand\eqae{\stackrel{a.e.}{=}}

\newcommand{\essunion}{\bigvee}
\newcommand{\rinfty}{\R^\infty_\N}

\newcommand{\masa}{masa}

\newcommand{\st}{\;:\;} 
\newcommand{\bfR}{\textbf{R}}
\newcommand{\bfC}{\textbf{C}}
\newcommand{\bfS}{\textbf{S}}

\newcommand{\tsrc}{\T(\bfS, \bfR, \bfC)}

\begin{document}

\title{A new class of maximal triangular algebras}
\author{John Lindsay Orr}
\address{
  Toll House,
  Traquair Road,
  Innerleithen, EH44 6PF,
  United Kingdom
}
\email{me@johnorr.us}
\keywords{Triangular algebra, nest algebra, infinite multiplicity}
\subjclass{47L35, 47L75}
\begin{abstract}
  Triangular algebras, and maximal triangular algebras in particular,
  have been objects of interest for over fifty years. Rich families
  of examples have been studied in the context of many w$^*$- and
  C$^*$-algebras, but there remains a dearth of concrete examples
  in $\bh$. In previous work, we described a family of maximal
  triangular algebras of finite multiplicity. Here, we investigate
  a related family of maximal triangular algebras with infinite multiplicity,
  and unearth new asymptotic structure which these algebras exhibit.
\end{abstract}

\maketitle

\section{Introduction}

Triangular algebras have been studied in a variety of contexts for over fifty years
since Kadison and Singer first introduced the concept of triangularity
in~\cite{KadisonSinger:TrOpAl}. Their initial study was of algebras $\T$ of bounded
operators on a Hilbert space. Such an algebra is said to be \emph{triangular} if
its diagonal subalgebra, $\T\cap \T^*$, is a maximal abelian self-adjoint algebra (\masa) in $\bh$.
In finite dimensions, a \masa\ is just the set of diagonal matrices with respect to
a fixed basis and any matrix algebra containing the \masa\ consists of a span of
matrix units with respect to this basis. The triangularity condition amounts to
$\T$ being precisely the span of matrix units $e_{i,j}$ where $i\preceq j$, as
determined by some partial ordering $\preceq$ of $\{1,2,\ldots,n\}$. The algebra
is then \emph{maximal} as a triangular algebra if and only if the associated
partial order is a linear order.

In infinite dimensions the \masa\ generalizes the algebra of diagonal matrices.
It is not, of course, always associated with a basis, but through the Spectral
Theory, can always be associated with a compact spectral set, and the goal
is to correspondingly associate the triangular algebra with a one-sided action
or partial order on the spectral set. This correspondence
has been the subject of study in a wide range of contexts.
The nest algebras, introduced by Ringrose~\cite{Ringrose:OnSoAlOp}
shortly after the triangular algebras, extended the class of hyperreducible triangular algebras
studied by Kadison and Singer, and proved more tractable than general triangular algebras.
Later authors explored triangular algebras of certain C$^*$-algebras
\cite{PetersPoonWagner:TrAfAl,Power:ClTrSu,MuhlySolel:SuGr}
and of von~Neumann algebras
\cite{MuhlySaitoSolel:CoTrOpAl}
which stimulated a rich body of results by many mathematicians in these
contexts.

Little however is known in detail about maximal triangular algebras on infinite dimensional
Hilbert spaces, where in general the \masa\ is not a Cartan algebra \cite{KadisonSinger:ExPuSt}.
Kadison and Singer \cite{KadisonSinger:TrOpAl} showed that
the lattice of invariant projections of a maximal triangular algebra
must be linearly ordered. They
focused on those maximal triangular algebras whose invariant lattice
is multiplicity free (i.e., has a cyclic separating vector), and showed that in this
case the algebra is determined by its invariant lattice; that it to say, it is a nest algebra.
They called such maximal triangular algebras \emph{hyperreducible}. They also showed,
in contrast to the finite-dimensional case,
that not all maximal triangular algebras are hyperreducible
and indeed that there exist maximal triangular algebras which are irreducible
(i.e., having no non-trivial invariant projections).
Solel~\cite{Solel:IrTrAl} further investigated irreducible triangular algebras.
Poon \cite{Poon:MaTrSu} and, independently, the present author~\cite{Orr:ClTrAl},
showed that in general maximal triangular algebras need not even be norm closed.

But apart from the hyperreducible case
no concrete examples of maximal triangular algebras in $\bh$ were known until \cite{Orr:TrAlIdNeAl}.
There, using techniques derived from
the similarity theory of nests \cite{Davidson:SiCoPeNeAl} it was possible to describe
two classes of non-hyperreducible maximal triangular algebras.
The first of these was based on a tensor-product construction proposed
in \cite{Kadison:TrAlAnCh} (see Theorem~\ref{tensor-product-construction-citation} below).
The other was based on a construction of block operator matrices
(see Theorem~\ref{uniform-algebras-thm} below).
The goal was to study a family of maximal triangular algebras we termed the
\emph{compressible} maximal triangular algebras
(see \cite[Definition 6.1]{Orr:TrAlIdNeAl} and Definition~\ref{compressible-def} below)
which are defined in analogy with the Type~I von~Neumann algebras, and we succeeded
in obtaining
detailed descriptions of most finite-multiplicity compressible maximal triangular algebras
in \cite[Theorem~6.1]{Orr:TrAlIdNeAl}.

The purpose of the present work is to extend the construction of compressible maximal triangular algebras
from finite multiplicity to infinite multiplicity.
In Theorem~\ref{extended-triangular-systems-make-tsrc-triangular} we present a new
construction for infinite multiplicity triangular algebras and in
Theorem~\ref{maximal-algebra-thm} we show when this construction yields maximal triangular algebras.
In Section~\ref{examples-section} we explore the range of examples provided by this
construction, and in Section~\ref{characterizing-section} we present criteria for
recognizing maximal triangular algebras which can be represented in this way.

One feature of this construction is that it exposes a new kind of ``asymptotic triangularity''
condition which appears in infinite multiplicity but not in finite multiplicity.
This is based on the ``liminal seminorms'' introduced in Definition~\ref{liminal-seminorm-def}
and the properties of their support sets seen in
Definition~\ref{definition-of-extended-triangular-system}.
Heuristically these conditions can be thought of as describing the contributions
to the norms of rows and columns which are not localized in individual block matrix
entries, but, rather, are residual in the row or column ``at infinity''.

In this paper we focus on those infinite-multiplicity compressible
maximal triangular algebras which are quite uniform
with respect to this asymptotic behavior, which we term \emph{simple}
algebras, although we also present examples of the more complex, but still tractable,
behavior of non-simple algebras (e.g., Example~\ref{non-simple-uniform-alegbra-example}).
The general case of compressible algebras with infinite multiplicity is still
unclear but this study illustrates the kind of subtleties which arise when
passing from finite to infinite multiplicity. Further work will be needed to
understand the infinite multiplicity case completely.

\section{Preliminaries}

Throughout this paper the underlying Hilbert spaces are always assumed separable.
A \emph{nest} is a set of projections on a Hilbert space which is linearly ordered,
contains $0$ and $I$, and is weakly closed (equivalently, order-complete).
The \emph{nest algebra}, $\alg{\N}$, of a nest $\N$ is the set of bounded operators
leaving invariant the ranges of $\N$. An \emph{interval} of $\N$ is the difference
of two projections $N > M$ in $\N$. Minimal intervals are called \emph{atoms} and
the atoms (if there are any) are pairwise orthogonal. If the join of the atoms is $I$
the nest is called \emph{atomic}; if there are no atoms it is called \emph{continuous}.
See \cite{Davidson:NeAl} for further properties of nest algebras.

Nests have a spectral theory analogous to the spectral theory for self-adjoint
operators \cite{Erdos:UnInNe}. Each nest is unitarily equivalent to a nest constructed
from a triple consisting of a linearly ordered set $X$ which is compact in its order topology,
a finite regular Borel measure $m$, and a measurable multiplicity function
$d:X\rightarrow\NN\cup\{+\infty\}$. Briefly, the construction is as follows:
For each $i\in\NN$ let
$X_i := \{x\in X : d(x) \ge i\}$ and for each $x\in X$ write
$L_x := \{y\in X\ : y \preceq x \}$. Let
$\H_i := L^2(X_i, m)$ and the nest consists of the projections on
$\H := \bigoplus H_i$ corresponding to multiplication by
the characteristic functions of $L_x\cap X_i$ on each $\H_i$.
If the multiplicity function is constant then the nest is said to have
\emph{uniform multiplicity}, the non-zero $\H_i$ are unitarily equivalent,
and $\N$ can be represented as a direct sum of copies of a multiplicity-free
nest. If the nest is continuous we can take $X =[0,1]$ and $m$ to be Lebesgue
measure. This representation also provides each nest with an associated projection-valued
spectral measure
corresponding to multiplication by the characteristic function of a Borel set. When the
nest is continuous we write $E(S)$ for the corresponding spectral measure on the
Borel sets of $[0, 1]$.

We now describe in more detail the two previously-known constructions for maximal triangular
algebras mentioned in the introduction.
The first of these realizes the ``triangular tensor product'' construction
envisioned in \cite{Kadison:TrAlAnCh}:

\begin{theorem}[\cite{Orr:TrAlIdNeAl} Theorem 5.1]\label{tensor-product-construction-citation}
  Let $\N_0$ and $\M_0$ be multiplicity-free nests on the Hilbert spaces
  $\H$ and $\K$ respectively, and let $\N := \N_0\otimes I_\K$ and
  $\M := I_\H\otimes\M_0$.
  Then there is a unique maximal triangular algebra $\T$ satisfying:
  \[
    \alg{\N_0}\otimes\alg{\M_0} \subseteq \T \subseteq \alg{\N_0}\otimes B(\K)
  \]
  Moreover $\T$ is the set of operators $X\in\alg{\N}$ such that (i)~whenever
  $M\in\M$ has both an immediate predecessor and successor in $\M$ then
  \[
    M^\perp X M \in \rinfty
  \]
  and (ii)~whenever $M > M' > M''$ are in $\M$ then
  \[
    M^\perp X M'' \in \rinfty
  \]
\end{theorem}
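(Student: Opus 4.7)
The plan is to establish the theorem in three stages. First, verify that the set $\T$ defined by conditions (i) and (ii) is a weakly closed subalgebra of $\alg{\N}$ containing $\alg{\N_0}\otimes\alg{\M_0}$. Second, show it is triangular by computing its diagonal. Third, prove it is the unique maximal triangular algebra satisfying the sandwich inclusion.

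For the first stage I would check that conditions (i) and (ii) are preserved under products. Given $X, Y \in \T$ and a projection $M \in \M$ with immediate predecessor and successor, decompose $M^\perp XY M = (M^\perp X M)(M Y M) + (M^\perp X M^\perp)(M^\perp Y M)$ and use the ideal properties of $\rinfty$ together with $X, Y \in \alg{\N}$. For condition (ii), an analogous three-term splitting which interleaves $M'$ between $M$ and $M''$ works, with the cross terms controlled by condition (i) at $M'$ when $M'$ has neighbours in $\M$. Containment of $\alg{\N_0}\otimes\alg{\M_0}$ in $\T$ is then immediate because any elementary tensor $A\otimes B$ with $B \in \alg{\M_0}$ satisfies $M^\perp(A\otimes B)M = A\otimes(M^\perp B M) = 0$.

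The triangularity check exploits the multiplicity-free hypothesis on $\N_0$ and $\M_0$. Since $\N = \N_0\otimes I_\K$, the algebra $\alg{\N}\cap\alg{\N}^*$ equals the commutant $\N_0'\otimes B(\K)$, and $\N_0'$ is a \masa\ because $\N_0$ is multiplicity free. Applying conditions (i) and (ii) to both $X$ and $X^*$ forces every interval compression of $X$ by $\M$ to vanish, so $X$ commutes with $\M$; combined with membership in $\N_0'\otimes B(\K)$, this identifies the diagonal with the \masa\ $\N_0'\otimes\M_0'$, as required.

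The hardest step, and where I expect the main obstacle, is the maximality and uniqueness argument. Suppose $X \in \alg{\N}$ violates (i) at some atomic $M$, or (ii) at some triple $M > M' > M''$; the task is to exhibit an operator in the algebra generated by $X$ and $\T$ whose presence breaks triangularity. I would do this by producing partial isometries in $\alg{\N_0}\otimes\alg{\M_0}$ that transport the non-$\rinfty$ residual of a suitable interval compression of $X$ onto the diagonal, yielding a self-adjoint element outside the \masa. The multiplicity-free hypothesis is essential here because it supplies enough partial isometries in the diagonal \masa's commutant to perform this routing. Handling the continuous part of $\M$ requires a careful spectral argument to reduce to configurations with three nested cuts, which is exactly what condition (ii) encodes. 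Uniqueness then follows because any maximal triangular algebra sandwiched in the prescribed inclusion must satisfy (i) and (ii) by the same obstruction argument, and hence coincide with $\T$.
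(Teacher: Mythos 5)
First, a point of order: the paper does not prove this theorem. It is quoted verbatim from \cite{Orr:TrAlIdNeAl} (Theorem 5.1) as background, so there is no in-paper proof to compare against; the paper does, however, tell you what the essential ingredient is, stating that ``the key technical result involved in proving the last two theorems was the Interpolation Theorem'' (Theorem~\ref{interpolation-theorem}). Judged on its own terms, your proposal has the right architecture --- closure under multiplication, identification of the diagonal, then maximality and uniqueness --- and the second stage is broadly sound: the diagonal computation via $\T\cap\T^*\subseteq\N'=\N_0'\otimes B(\K)$, the fact that $\rinfty$ meets the diagonal of $\alg\N$ only in $0$, and the commutation theorem do identify the diagonal with $\N_0'\otimes\M_0'$, modulo a limit argument for those $M\in\M$ lacking immediate neighbours (where neither (i) nor (ii) applies directly and you must approximate $M$ from above or below within $\M$).

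There are two gaps. The smaller one is in the closure argument for condition (ii): splitting $M^\perp XYM''$ at $M'$ and invoking (i) at $M'$ ``when $M'$ has neighbours'' does not cover all configurations. If $M$ is the immediate successor of $M'$ but $M'$ has no immediate predecessor (an atom of $\M_0$ sitting atop a continuous stretch), then neither (i) at $M'$ nor (ii) controls $M^\perp XM'$, and the term $(M^\perp XM')(M'YM'')$ is not obviously in $\rinfty$. The fix is to move the splitting point: in that case there exist $N_1<N_2$ strictly between $M''$ and $M'$, and splitting at $N_2$ puts both resulting factors under (ii); a symmetric adjustment handles $M'$ with no immediate successor. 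The serious gap is the maximality step. ``Partial isometries in $\alg{\N_0}\otimes\alg{\M_0}$ transporting the non-$\rinfty$ residual onto the diagonal'' is not a mechanism: failure of $M^\perp XM''\in\rinfty$ means the diagonal seminorm $i_t(M^\perp XM'')$ is bounded below on a non-null set, and multiplying by partial isometries cannot convert that measure-theoretic fact into a self-adjoint element of the generated algebra lying outside the \masa. What is needed is exactly the Interpolation Theorem: it yields $A,B\in\alg\N$ with $A(M^\perp XM'')B$ equal to a nonzero spectral projection of $\N_0$ tensored into the appropriate corner, and the point of landing on such a projection is that $E(S)\otimes T$ lies in $\N'\subseteq\alg\N$ for every $T\in B(\K)$, so the reversed operator supported on the $M''(\cdot)M^\perp$ corner is automatically in $\alg\N$ and hence in $\T$; the pair then destroys triangularity of the generated algebra. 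Without this factorization device the obstruction argument, and with it both maximality and uniqueness, does not go through.
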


In the statement of the theorem, $\rinfty$ denotes Larson's Ideal, introduced in
\cite{Larson:NeAlSiTr}, which is the largest off-diagonal ideal of $\alg{\N}$
\cite[Theorem 4.1]{Orr:TrAlIdNeAl}. See also
\cite[Chapter 15]{Davidson:NeAl} for details of tensor products; in the theorem the
tensor products are weakly closed spatial tensor products of the respective algebras.

While Theorem~\ref{tensor-product-construction-citation} will provide us with useful examples,
our main focus in this paper will be on the class of \emph{compressible} maximal
triangular algebras which we introduced in \cite{Orr:TrAlIdNeAl} in analogy with
the Type~I von~Neumann algebras.

\begin{definition}\label{compressible-def}
Let $\T$ be a triangular algebra. Let $\N$ be the lattice of invariant projections
of $\T$, which was shown to be a nest in \cite{KadisonSinger:TrOpAl}. The commutant $\N'$
is a Type~I von~Neumann algebra and so contains a partition of the identity $E_i$
consisting of abelian projections. If such $E_i$ can be chosen so that
$E_i\T|_{E_i\H}$ is maximal triangular for all $i$ we say that $\T$ is \emph{compressible}.
\end{definition}

In \cite{Orr:TrAlIdNeAl} we saw both that the compression of a maximal triangular algebra
to the range of an abelian projection in $\N'$ need not always be maximal, and also that, if such
projections can be found, they can provide a basis for completely describing the algebra.
More precisely, in \cite[Theorem 6.1]{Orr:TrAlIdNeAl} we saw that if $\T$ is
a compressible maximal triangular algebra
and $\N$ has no infinite multiplicity part and satisfies some other mild regularity
conditions on its spectral multiplicity, then $\T$ can be completely described.

In our present study we will go to the other extreme and focus on the case when $\N$ has uniform
infinite multiplicity. (Studying the case of mixed finite and infinite multiplicity is
premature when the full range of infinite multiplicity behavior is not yet understood.)
The starting point for our study will therefore be in analogy with the results from
\cite[Example 6.3]{Orr:TrAlIdNeAl} which present an easily visualized construction for uniform
\emph{finite} multiplicity compressible maximal triangular algebras as finite block operators matrices.
We will give a precise statement of the finite-multiplicity result after first defining the
diagonal seminorm function, which will be another key ingredient of our study:

\begin{definition}
  Let $\N=\{N_t : t\in[0,1]\}$ be a continuous nest where the indexing is
  compatible with the spectral measure (i.e., $N_t = E([0,t])$).
  For $X\in\alg\N$ and $x\in[0,1]$ we define the \emph{diagonal seminorm function}
  $i_x(X)$ by the formula
  \[
    i_x(X) := \inf \{\|(N_t - N_s)X(N_t - N_s)\| \st s<x<t\}
  \]
\end{definition}

\begin{theorem}\label{uniform-algebras-thm}
  Suppose that $\N_0$ is a continuous nest on $\H$
  and $\M_0$ is a finite nest of length $n$ on $\K = \CC^n$,
  and let $\N := \N_0\otimes I_\K$ and
  $\M := I_\H\otimes\M_0$.
  Write $E_i$ for the minimal intervals (atoms) of $\M$. Then every maximal
  triangular algebra $\T$ satisfying:
  \[
    \alg{\N_0} \otimes \M_0' \subseteq \T \subseteq \alg{\N_0}\otimes B(\K)
  \]
  is of the the form:
  \[
    \{X \in \alg\N \st \text{
        for each $1 \le i,j \le n$ and a.e. $x\not\in S_{i,j}$,
        $i_x(E_iXE_j) = 0$
      }
    \}
  \]
  where the sets $S_{i,j}$ ($1 \le i,j \le n$) are Borel subsets of $[0,1]$
  satisfying
  \begin{eqnarray*}
    S_{i,i} &=& [0,1], \\
    S_{i,j} &=& S_{j,i}^c \text{ for $i \not= j$, and}\\
    S_{i,j} \cap S_{j,k} &\subseteq& S_{i,k} \text{ for all $i,j,k$}
  \end{eqnarray*}
\end{theorem}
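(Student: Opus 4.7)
The plan is to pass to a quotient where the algebra becomes a measurable field of finite-dimensional triangular matrix algebras, extract a linear order $\prec_x$ from each fiber, and define $S_{i,j} = \{x \st i \preceq_x j\}$. First, identify $\alg\N$ with $M_n(\alg{\N_0})$ in block form: since $\N = \N_0 \otimes I_\K$ with $\K = \CC^n$, the rank-one atoms $E_1,\ldots,E_n$ of $\M$ decompose any $X \in \alg\N$ as a matrix $(X_{i,j})$ with $X_{i,j} \in \alg{\N_0}$. The inclusion $\alg{\N_0} \otimes \M_0' \subseteq \T$ puts each $E_i$ in $\T$, so $\T$ has the bimodule decomposition $\T = \bigoplus_{i,j} E_i \T E_j$, with each $E_i \T E_j$ identified with an $\alg{\N_0}$-bimodule $\T_{i,j} \subseteq \alg{\N_0}$, and $\T_{i,i} = \alg{\N_0}$.

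Next, because $\rinfty$ is the largest off-diagonal ideal of $\alg\N$, maximality of $\T$ forces $\rinfty \subseteq \T$. For the nest $\N = \N_0 \otimes I_\K$ built on a continuous $\N_0$ of uniform multiplicity $n$, the quotient $\alg\N / \rinfty$ is naturally isomorphic to $L^\infty([0,1], M_n(\CC))$ via the map sending the class of $(X_{i,j})$ to the matrix-valued function whose $(i,j)$ entry at $x$ has local norm $i_x(X_{i,j})$. Under this map the image $\widetilde\T$ of $\T$ is a triangular subalgebra of $L^\infty([0,1], M_n(\CC))$ containing $L^\infty([0,1], \M_0')$ as its diagonal.

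The heart of the argument is then to show that the $L^\infty([0,1], \M_0')$-bimodule structure of $\widetilde\T$ realizes it as a measurable field $\widetilde\T = \{f \st f(x) \in \widetilde\T_x \text{ for a.e.\ } x\}$, with $\widetilde\T_x \subseteq M_n(\CC)$ a subalgebra containing $\M_0'$, and that maximality of $\T$ forces each $\widetilde\T_x$ to be maximal triangular for a.e.\ $x$ (otherwise a measurable selection of proper triangular enlargements on a fiber set of positive measure would lift to a proper triangular extension of $\T$ inside $\alg\N$). Since the maximal triangular subalgebras of $M_n(\CC)$ containing the diagonal are exactly the upper-triangulars with respect to some linear order on $\{1,\ldots,n\}$, one obtains a measurable family $\{\prec_x\}$ of linear orders; setting $S_{i,j} := \{x \st i \preceq_x j\}$ gives Borel sets whose three listed properties are exactly reflexivity, totality with antisymmetry, and transitivity. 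The fiberwise condition $X \in \T$ then translates precisely into $i_x(E_i X E_j) = 0$ for a.e.\ $x \notin S_{i,j}$.

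The main obstacle is the measurable-selection/maximality step of the third paragraph: verifying both that $\widetilde\T$ is determined by its fibers and that maximality of $\T$ descends to almost every fiber. Standard $L^\infty$-bimodule theory together with a Kuratowski--Ryll-Nardzewski type selection argument should suffice, but the bookkeeping through the continuous-nest quotient by $\rinfty$---in particular, checking that a fiberwise enlargement actually lifts to a triangular extension of $\T$ inside $\alg\N$---must be carried out with care.
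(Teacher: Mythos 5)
Your guiding heuristic --- that a maximal triangular algebra between $\alg{\N_0}\otimes\M_0'$ and $\alg{\N_0}\otimes B(\K)$ is governed by a measurable family of linear orders on $\{1,\dots,n\}$ encoded by the sets $S_{i,j}$ --- is exactly right, and steps one and two (the block decomposition, and $\rinfty\subseteq\T$) are sound, though the latter is not a one-liner from ``largest off-diagonal ideal'': a self-adjoint element of $\T+\rinfty$ need not split into self-adjoint pieces, and the argument genuinely needs the Interpolation Theorem (it is carried out in the first proposition of Section~\ref{characterizing-section}). The fatal problem is your third step. The quotient $\alg\N/\rinfty$ is \emph{not} isomorphic to $L^\infty([0,1],M_n(\CC))$; it is not even commutative when $n=1$. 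Take an enumeration $(s_m,t_m)$ of the rational intervals, points $s_m<x_m<y_m<t_m$, and orthonormal vectors $\alpha_m,\beta_m,\gamma_m$ in the ranges of $N_{x_m}-N_{s_m}$, $N_{y_m}-N_{x_m}$, $N_{t_m}-N_{y_m}$ respectively (possible since $\N_0$ is continuous and multiplicity-free). With $X:=\sum_m\alpha_m\otimes\beta_m^*$ and $Y:=\sum_m\beta_m\otimes\gamma_m^*$, both in $\alg{\N_0}$, one gets $YX=0$ while $XY=\sum_m\alpha_m\otimes\gamma_m^*$ has $i_t(XY)=1$ for every $t$, so $XY-YX\notin\R^\infty_{\N_0}$; this is the same construction the paper uses in Proposition~\ref{ts-is-a-triangular-space}. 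Since $i_t$ is only a submultiplicative seminorm, not the modulus of a character, the recipe you give does not define an algebra homomorphism, and the ``measurable field of fibers $\widetilde\T_x\subseteq M_n(\CC)$'' on which your third and fourth paragraphs rest does not exist; the measurable-selection and lifting steps then have nothing to act on.

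The mechanism your proposal bypasses is the Interpolation Theorem (Theorem~\ref{interpolation-theorem}), which is the engine of the actual argument (the theorem is quoted here from prior work, but the method is visible throughout this paper). One defines $S_{i,j}$ directly as the essential union over $X\in\T$ and $a>0$ of $\{t: i_t(E_iXE_j)\ge a\}$. Whenever $i_t(E_iXE_j)\ge a$ on a closed non-null $K$, interpolation produces $A,B\in\alg\N$ with $(E_iAE_i)X(E_jBE_{i,j})=E(K)E_{i,j}$, and the outer factors lie in $\alg{\N_0}\otimes\M_0'\subseteq\T$, so $E(K)E_{i,j}\in\T$. Multiplying such elements yields $E(K)E_{i,k}\in\T$ and hence transitivity; triangularity of $\T$ forces antisymmetry; and maximality forces $S_{i,j}\cup S_{j,i}=[0,1]$ and, once one checks that the displayed set is itself a triangular algebra (as in Theorem~\ref{extended-triangular-systems-make-tsrc-triangular}, where finite multiplicity makes the liminal terms vanish), the reverse inclusion. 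Your fibrewise linear orders are then recovered as $i\preceq_t j$ iff $t\in S_{i,j}$, but they must be extracted by these operator-level interpolation arguments rather than through a quotient isomorphism.
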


Note the inclusion condition on $\T$ in the last result holds for any compressible
algebra with finite uniform multiplicity nest, and so there is no loss of generality involved,
just a selection of a fixed representation of the nest.

The key technical result involved in proving the last two theorems
was the \emph{Interpolation Theorem},
which is also a crucial tool in the present work:

\begin{theorem}[Interpolation Theorem; \cite{Orr:TrAlIdNeAl} Theorem 3.1]\label{interpolation-theorem}
  Let $\N$ be a continuous nest indexed as above,
  let $X\in\alg\N$ and, for $a>0$, let
  $S := \{t\in[0,1] : i_t(X) \ge a\}$. Then there are operators $A, B\in\alg\N$
  such that $AXB = E(S)$.
\end{theorem}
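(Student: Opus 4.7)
The plan is to localize on small intervals of $[0,1]$ and piece together local factorizations. By the very definition of $i_t$, at every $t\in S$ and every $\e>0$ there is an interval $(s,u)\ni t$ of length less than $\e$ with $\|(N_u-N_s)X(N_u-N_s)\|\ge a$. A Vitali-type covering argument extracts a countable pairwise-disjoint family $\{(s_k,u_k)\}$ of such intervals whose union covers $S$ up to a Lebesgue-null set, and continuity of $\N$ then gives $E(S)=\sum_k E(S\cap(s_k,u_k))$ in the strong operator topology.

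The target becomes, for each $k$, to build $A_k, B_k\in\alg\N$ supported on $(s_k,u_k)$ with uniformly bounded norm and $A_k X B_k = E(S\cap(s_k,u_k))$. If achieved, pairwise orthogonality of the supports makes the strong-operator sums $A:=\sum_k A_k$ and $B:=\sum_k B_k$ bounded elements of $\alg\N$, and (modulo handled cross terms) $AXB=\sum_k A_kXB_k=E(S)$.

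For the local construction, the norm bound $\|(N_{u_k}-N_{s_k})X(N_{u_k}-N_{s_k})\|\ge a$ combined with $i_t(X)\ge a$ at each $t\in S\cap(s_k,u_k)$ lets me pick an intermediate splitting point $r_k$ together with unit vectors $\xi_k\in(N_{r_k}-N_{s_k})\H$ and $\eta_k\in(N_{u_k}-N_{r_k})\H$ for which $|\langle X\eta_k,\xi_k\rangle|$ is close to $a$. I would then assemble rank-one tiles $|\phi\rangle\langle\xi_k|$ (supported ``below'' $\xi_k$) and $|\eta_k\rangle\langle\phi'|$ (supported ``above'' $\eta_k$) with $\phi,\phi'$ ranging over suitably placed orthonormal bases of pieces of $E(S\cap(s_k,u_k))\H$, so that each tile lies in $\alg\N$ by the standard rank-one criterion (range vector below coefficient vector in the nest), and summation over $\phi,\phi'$ synthesises the required projection.

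The main obstacle is twofold. First, the norm of the full block $(N_{u_k}-N_{s_k})X(N_{u_k}-N_{s_k})$ does not automatically lower-bound the off-diagonal subblock $(N_{r_k}-N_{s_k})X(N_{u_k}-N_{r_k})$ that drives $\langle X\eta_k,\xi_k\rangle$, since the diagonal subblocks could carry most of the norm; the splitting point $r_k$ and, if necessary, further subdivisions must be chosen to concentrate enough norm off the diagonal. Second, cross terms $A_kXB_l$ for $l\neq k$ need not vanish a priori, because $X$ maps the support of $\eta_l$ into $N_{u_l}\H$, which can overlap the support of $\xi_k$ when $l>k$. I would handle both difficulties by an approximation-then-correction strategy: first construct $A^{(0)},B^{(0)}\in\alg\N$ with $A^{(0)}XB^{(0)}=E(S)+K$ for some error $K$ with $E(S)K=KE(S)=K$ and $\|K\|<1$ (arranged by requiring $E(S)A^{(0)}=A^{(0)}$ and $B^{(0)}E(S)=B^{(0)}$); then replace $A^{(0)}$ by $\bigl(\sum_{n\ge 0}(-K)^n\bigr)A^{(0)}$, noting that the Neumann series converges in the norm-closed algebra $\alg\N$ and that this correction yields $AXB=E(S)$ exactly.
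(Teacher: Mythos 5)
This theorem is imported: the paper states it as a citation of earlier work (\cite{Orr:TrAlIdNeAl}, Theorem~3.1, and \cite{Orr:MaIdNeAl}, Theorem~1.2) and gives no proof here, so there is no in-paper argument to compare against; I can only judge your proposal on its own terms. Your overall architecture --- Vitali localization of $S$, synthesis of $E(S)$ from rank-one tiles built out of matrix elements of $X$, and a Neumann-series correction of an error $K$ satisfying $E(S)KE(S)=K$ and $\|K\|<1$ --- is the right general shape, and the correction step itself is sound: $K\in\alg\N$, the algebra is norm closed, $E(S)\in\N'\subseteq\alg\N$, and $(I+K)^{-1}(E(S)+K)=E(S)$ when $K$ is compressed to $E(S)$.

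The gaps are in the two obstacles you flag and in one you do not, and together they are not deferrable details but the actual content of the theorem. First, a single splitting point $r_k$ and a single pair $(\xi_k,\eta_k)$ per covering interval cannot synthesize $E(S\cap(s_k,u_k))$, which is an infinite-rank projection whenever $S\cap(s_k,u_k)$ is non-null: you need an infinite family of pairs $(\xi_{k,m},\eta_{k,m})$ with $\langle X\eta_{k,m},\xi_{k,m'}\rangle=c_m\delta_{m,m'}$ and $c_m$ bounded below, one pair for each basis vector $\phi_m$ of the range, and producing such a family with the required mutual orthogonality is where the work lies. Second --- the unflagged obstruction --- the rank-one membership criterion you invoke cuts against your tiling: $\phi_m\otimes\xi_{k,m}^*\in\alg\N$ forces $\phi_m$ to sit \emph{below} $\xi_{k,m}$ in the nest, while $\eta_{k,m}\otimes\phi_m^*\in\alg\N$ forces $\phi_m$ to sit \emph{above} $\eta_{k,m}$; since the $\phi_m$ must span $L^2(S\cap(s_k,u_k))$, which is spread across the whole interval, the portion of $E(S)$ near a point $t\in S$ can only be manufactured from matrix elements of $X$ localized near $t$. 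One scale of subdivision therefore cannot suffice; one is forced into a genuinely multiscale construction (all intervals $((p-1)/q,(p+1)/q)$ meeting $S$, exactly as in Lemma~\ref{row-column-lemma} and Remark~\ref{interval-family-remark} of this paper), and the cross-term control needed to get $\|K\|<1$ must be extracted across all scales at once, by essential-norm arguments of the type cited there (e.g.\ \cite{Orr:StIdNeAl}, Lemma~2.2). Finally, even producing one off-diagonal matrix element of size comparable to $a$ from $\|(N_u-N_s)X(N_u-N_s)\|\ge a$ needs an argument you do not supply; it is precisely here that continuity of the nest and the fact that the lower bound holds on \emph{every} subinterval containing a point of $S$ must be exploited. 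As written, the proposal is a reasonable plan of attack with its hardest steps left open.
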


Although the substance of this result was proved in \cite[Theorem 3.1]{Orr:TrAlIdNeAl},
it should be noted that the proof there made use of a slightly different diagonal seminorm function
(the $i_x^+$ of Ringrose's \cite{Ringrose:OnSoAlOp}) and that the Interpolation Theorem
based on our present $i_x$'s was given in \cite[Theorem 1.2]{Orr:MaIdNeAl}.

We will use the diagonal seminorm function throughout our results. The following lemma
is routine to prove and captures the key technical properties of the function.

\begin{lemma}\label{properties-of-i}
  For fixed $x\in[0,1]$, $i_x(X)$ is a submultiplicative seminorm on $\alg\N$.
  For fixed $X\in\alg\N$, $i_x(X)$ is an upper semicontinuous function on $[0,1]$.
\end{lemma}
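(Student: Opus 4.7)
The plan is straightforward; the whole lemma rests on one simple monotonicity principle together with the nest-algebra identity $(N_t-N_s)XY(N_t-N_s) = (N_t-N_s)X(N_t-N_s)Y(N_t-N_s)$ for $X,Y\in\alg\N$.

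First I would record the \emph{monotonicity of the defining norm}: if $s\le s'<t'\le t$, then $(N_{t'}-N_{s'}) = (N_{t'}-N_{s'})(N_t-N_s)$ because the nest is commutative and linearly ordered, so
\[
  \|(N_{t'}-N_{s'})X(N_{t'}-N_{s'})\| \le \|(N_t-N_s)X(N_t-N_s)\|.
\]
Consequently, given any two admissible intervals $(s_1,t_1),(s_2,t_2)$ straddling $x$, their intersection $(\max s_i,\min t_i)$ is again admissible and bounds both norms from below. This is the one device needed to handle infima in the arguments that follow.

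Next, for the seminorm properties, positive homogeneity and nonnegativity are immediate. For the triangle inequality, fix $\e>0$ and pick $(s_1,t_1)$, $(s_2,t_2)$ with norms within $\e$ of $i_x(X)$ and $i_x(Y)$ respectively; on the common refinement both bounds still apply by the monotonicity above, and the usual triangle inequality for the operator norm gives $i_x(X+Y)\le i_x(X)+i_x(Y)+2\e$. For submultiplicativity I would first verify the identity
\[
  (N_t-N_s)XY(N_t-N_s) = (N_t-N_s)X(N_t-N_s)Y(N_t-N_s),
\]
by decomposing $I = N_s + (N_t-N_s) + (I-N_t)$ between $X$ and $Y$ and using $X\in\alg\N$ to kill $(N_t-N_s)XN_s$ and $Y\in\alg\N$ to kill $(I-N_t)Y(N_t-N_s)$. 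Submultiplicativity of the norm and the common-refinement trick then give $i_x(XY)\le i_x(X)i_x(Y)$.

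Finally, upper semicontinuity amounts to showing $\{x\in[0,1] \st i_x(X) < \alpha\}$ is open for every $\alpha$. If $i_{x_0}(X) < \alpha$, then by definition of the infimum there exist $s<x_0<t$ with $\|(N_t-N_s)X(N_t-N_s)\| < \alpha$, and then for every $x\in(s,t)$ the same interval is admissible, so $i_x(X) < \alpha$; hence $(s,t)$ is an open neighborhood of $x_0$ inside the sublevel set.

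The only mildly subtle step is the algebraic identity for $(N_t-N_s)XY(N_t-N_s)$; everything else is routine once the monotonicity principle is in hand, which is why the lemma is stated as ``routine to prove.''
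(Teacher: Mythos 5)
Your proof is correct. The paper leaves this lemma to the reader as routine, and your argument --- monotonicity of $\|(N_t-N_s)X(N_t-N_s)\|$ under shrinking the interval, the identity $(N_t-N_s)XY(N_t-N_s)=(N_t-N_s)X(N_t-N_s)\cdot(N_t-N_s)Y(N_t-N_s)$ valid for $X,Y\in\alg\N$, and the open-sublevel-set characterization of upper semicontinuity --- is exactly the standard argument intended.
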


\section{The simple uniform algebras}

In this section we will see the precise definition of the new class of
infinite multiplicity maximal triangular algebras which will be our main object
of study in this paper, and which we term the \emph{simple} uniform algebras
(see Definition~\ref{define-simple-algebra}).

For the rest of this paper, fix $\H$ and $\K$ as separable infinite-dimensional Hilbert spaces.
Let $\N_0$ be a multiplicity-free
continuous nest on $\H$ and $\D_0$ an atomic masa on $\K$.
Let $\N := \N_0 \otimes I_\K$ and $\D := I_\H \otimes \D_0$.
We naturally visualize the elements of $\alg\N$ as infinite block operator matrices
with entries in the continuous nest algebra $\alg{\N_0}$.

The atoms of $\D_0$ are one-dimensional so pick a basis $\e_i$ of $\K$ consisting of
unit vectors in the atoms of $\D_0$. Let $E_{i,j} := I\otimes (\e_i\otimes \e_j^*)$
where we adopt the notation $\alpha\otimes \beta^*$ for the rank-1 operator
$\langle \;.\;, \beta\rangle \alpha$.
Also write $E_i := E_{i,i}$ and
$N_x$ ($x \in [0,1]$) for the nest projections of
$\N$, where the indexing is compatible with the spectral measure (i.e., $N_t = E([0,t])$).

Note that any triangular algebra $\T$ satisfying the inclusion relation
\[
  \alg{\N_0}\otimes\D_0\subseteq\T\subseteq\alg{\N_0}\otimes\B(\K)
\]
is compressible
and so throughout the remainder of this paper, and especially in
Section~\ref{characterizing-section}, we shall focus on triangular
algebras satisfying this relation.

The following definition is just the direct analogue of the sets used in
Theorem~\ref{uniform-algebras-thm}, except with infinite multiplicity. In the proposition
that follows it, we see that these properties alone are not enough to specify
a triangular algebra in the infinite-multiplicity case.

\begin{definition}
  Let $\bfS = (S_{i,j})_{i,j\in\NN}$ be a collection of Borel
  subsets of $[0, 1]$ satisfying:
  \begin{enumerate}
    \item $S_{i,i} = [0,1]$ for all $i\in\NN$
    \item $S_{i,j} \cap S_{j,i} = \emptyset$ for all $i\not=j$ in $\NN$
    \item $S_{i,j} \cap S_{j,k}\subseteq S_{i,k}$ for all $i,j,k\in\NN$
  \end{enumerate}
  Then $\bfS$ is called a \emph{triangular system}.
\end{definition}

\begin{proposition}\label{ts-is-a-triangular-space}
  Let $\bfS$ be a triangular system and let $\T(\bfS)$
  be the set of all $X\in\alg{\N}$ such that
  $i_x(E_i X E_j) = 0$ for each $1 \le i,j < \infty$ and a.e. $x\not\in S_{i,j}$.
  Then $\T(\bfS)$ is a triangular space but is never an algebra; that is to say,
  it is a linear space and $\T(\bfS)\cap\T(\bfS)^*$ is a \masa, but it is not
  closed under multiplication.
\end{proposition}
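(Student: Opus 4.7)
The plan is to verify three things: that $\T(\bfS)$ is a linear subspace, that $\T(\bfS)\cap\T(\bfS)^*$ is a masa, and that $\T(\bfS)$ is not closed under multiplication. The first two are routine applications of the properties of $i_x$; the third is the main obstacle and requires an explicit construction that uses infinite multiplicity essentially.

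Linearity is immediate from Lemma~\ref{properties-of-i}, since each $i_x$ is a seminorm and so each defining condition cuts out a linear subspace. For the masa claim, I would first observe that $\T(\bfS)\cap\T(\bfS)^*\subseteq\alg\N\cap\alg\N^*=\N'=L^\infty[0,1]\otimes B(\K)$, so every block $E_iXE_j$ of an element $X$ in this intersection is multiplication by some $f_{i,j}\in L^\infty[0,1]$. Using $i_x(Z^*)=i_x(Z)$ together with $(X^*)_{i,j}=(X_{j,i})^*$, the condition $X^*\in\T(\bfS)$ becomes $i_x(M_{f_{i,j}})=0$ a.e.\ on $S_{j,i}^c$, and combining with the condition from $X$ and the axiom $S_{i,j}\cap S_{j,i}=\emptyset$ forces $i_x(M_{f_{i,j}})=0$ a.e.\ on $[0,1]$ whenever $i\ne j$. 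At every Lebesgue point of $|f_{i,j}|$ one has $i_x(M_{f_{i,j}})\ge|f_{i,j}(x)|$ (essential supremum dominates average), so $f_{i,j}=0$ a.e.\ and $X\in L^\infty[0,1]\otimes\D_0$. The reverse inclusion is clear, and this is a masa as the tensor product of two masas.

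For the failure of closure --- the crux of the argument --- I would pick $(i_0,j_0)$ with $|S_{i_0,j_0}^c|>0$, which exists because the axiom yields $S_{i,j}^c\cup S_{j,i}^c=[0,1]$ for $i\ne j$. The goal is to produce $X,Y\in\T(\bfS)$ with $i_x\bigl((XY)_{i_0,j_0}\bigr)\equiv1$ on $(0,1)$. Choose a pairwise disjoint family $\{J_n\}$ of open subintervals of $(0,1)$ such that every open subinterval of $(0,1)$ contains some $J_n$ entirely (for example, the components of the complement of a positive-measure fat Cantor set), subdivide each $J_n$ into three consecutive subintervals $I_n^\delta,I_n^\alpha,I_n^\beta$, and let $\delta_n,\alpha_n,\beta_n$ be their normalized characteristic functions. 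The rank-one operators $\delta_n\otimes\alpha_n^*$, $\alpha_n\otimes\beta_n^*$, and $\delta_n\otimes\beta_n^*$ all lie in $\alg{\N_0}$ (their domain-support sits to the left of their range-support) and each has $i_x\equiv0$, since shrinking an interval around any point eventually reduces one factor's mass to zero. Fix a bijection $\ell:\NN\to\NN\setminus\{i_0,j_0\}$ and set
\[
  X:=\sum_n(\delta_n\otimes\alpha_n^*)\otimes(e_{i_0}\otimes e_{\ell(n)}^*),\qquad Y:=\sum_n(\alpha_n\otimes\beta_n^*)\otimes(e_{\ell(n)}\otimes e_{j_0}^*).
\]
Orthonormality of $\{\delta_n\},\{\alpha_n\},\{\beta_n\}$ bounds $\|X\|,\|Y\|\le 1$, and $X,Y\in\T(\bfS)$ because every nonzero block has $i_x\equiv0$. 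Multiplying, every block of $XY$ vanishes except $(XY)_{i_0,j_0}=T:=\sum_n\delta_n\otimes\beta_n^*$. For any $(s,t]\subseteq(0,1)$, by construction some $J_{n_0}\subseteq(s,t]$, whence $\chi_{(s,t]}T\chi_{(s,t]}\beta_{n_0}=\delta_{n_0}$ and so $\|\chi_{(s,t]}T\chi_{(s,t]}\|\ge1$, giving $i_x(T)=1$ for every $x\in(0,1)$. Since $|S_{i_0,j_0}^c|>0$, this shows $XY\notin\T(\bfS)$. The essential new phenomenon is that an SOT-convergent sum of rank-one operators each having $i_x\equiv0$ may aggregate into an operator with $i_x\equiv1$ --- impossible in finite multiplicity, where every relevant sum is finite and hence respects subadditivity of $i_x$.
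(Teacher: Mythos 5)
Your first two parts are fine. Linearity is immediate from Lemma~\ref{properties-of-i}, and your masa argument is a legitimate, self-contained alternative to the paper's: where the paper deduces from $i_x(E_iTE_j)\eqae 0$ that $E_iTE_j\in\rinfty$ (citing Theorem~2.1 of \cite{Orr:TrAlIdNeAl}) and then uses that $\rinfty$ is diagonal-disjoint, you instead identify each block of an element of $\N'$ with a multiplication operator $M_{f_{i,j}}$ and use the Lebesgue-point estimate $i_x(M_f)\ge|f(x)|$ to force $f_{i,j}=0$ a.e. Both routes are sound.

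The non-closure argument, however, contains a genuine gap: the interval family you posit does not exist. If $\{J_n\}$ is pairwise disjoint and every open subinterval of $(0,1)$ contains some $J_n$ entirely, then any proper open subinterval of $J_1$ must contain some $J_m$; disjointness forces $m=1$, yet $J_1$ is not contained in its own proper subinterval --- contradiction. (The components of the complement of a fat Cantor set fail in exactly this way: a subinterval of one gap contains no gap entirely.) The error propagates to your conclusion: for $x$ in the interior of a middle third $I_n^\alpha$, a small interval about $x$ meets no $I_m^\delta$ or $I_m^\beta$, so $(N_t-N_s)T(N_t-N_s)=0$ and $i_x(T)=0$ there. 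With disjoint supports, $i_x(T)\ge 1$ can hold only on a set disjoint from $\bigcup_n J_n$ (up to countably many endpoints), never on all of $(0,1)$. The paper avoids this by abandoning disjointness: it enumerates \emph{all} rational intervals $(s_n,t_n)$ and obtains the orthonormality of $\{\alpha_n\},\{\beta_n\},\{\gamma_n\}$ not from disjoint supports but by choosing the vectors inductively inside the infinite-dimensional spectral subspaces $(N_x-N_{s_n})E_i\H$, etc.; this is where infinite dimensionality genuinely enters, and it yields $i_x(E_iXYE_j)\ge 1$ for every $x\in[0,1]$. Your construction is salvageable if you settle for less: keep the disjoint $J_n$, take them to be the gaps of a positive-measure nowhere dense closed set $C$ placed (by inner regularity) inside $S_{i_0,j_0}^c$, and observe that $i_x(T)\ge 1$ at every two-sided accumulation point of $C$, a non-null subset of $S_{i_0,j_0}^c$ --- which suffices since membership in $\T(\bfS)$ is only an a.e.\ condition. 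As written, though, the step is unsound.
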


\begin{proof}
  For each $i,j\in\NN$ and $x\in[0,1]$, the function $X\mapsto i_x(E_iXE_j)$ is a
  norm-bounded seminorm. From this it is routine to see that $\T(\bfS)$ is a norm-closed
  linear space. If $T\in\T(\bfS)\cap\T(\bfS)^*$ then $T\in\alg\N\cap(\alg\N)^*=\N'$. We must
  show $E_iTE_j=0$ for all $i\not=j$, for then $T$ commutes with all $E_i$ and
  so $T\in\N_0'\otimes\D_0$, the diagonal \masa.

  Suppose $E_iTE_j\not=0$ for some
  $i\not=j$. Then $i_x(E_iTE_j)$ is zero almost everywhere outside $S_{i,j}$
  and $i_x(E_jT^*E_i)$ is zero almost everywhere outside $S_{j,i}$. Since these two
  quantities are equal, and $S_{i,j}\cap S_{j,i}=\emptyset$, it follows $i_x(E_iTE_j)$ is zero
  almost everywhere. By Theorem~2.1 of \cite{Orr:TrAlIdNeAl}, $E_iTE_j\in\rinfty$ which
  is a diagonal-disjoint ideal  of $\alg\N$ and yet $E_iTE_j$ belongs to $\N'$, the diagonal
  of $\alg\N$, hence $E_iTE_j=0$.

  To see that $\T(\bfS)$ is not an algebra, we shall fix arbitrary $i,j$ and construct
  operators $X=E_i X$ and $Y=YE_j$ in $\T(\bfS)$ which satisfy $i_x(E_iXYE_j)\ge 1$
  for all $x\in[0,1]$. Let $(s_n, t_n)$ be an enumeration of all the open intervals with
  rational endpoints in $[0,1]$. For each $n$ pick $s_n<x<y<t_n$ and set
  $X_n := \alpha_n \otimes \beta_n^*$ and
  $Y_n := \beta_n \otimes \gamma_n^*$
  where $\alpha_n$, $\beta_n$, and $\gamma_n$ are unit vectors in, respectively,
  the range of
  $(N_x-N_{s_n})E_i$,
  $(N_y-N_x)E_n$, and
  $(N_{t_n}-N_y)E_j$.
  Since each of these ranges is infinite dimensional we can choose the
  $\alpha_n$, $\beta_n$, and $\gamma_n$ inductively to be pairwise orthogonal
  sequences.
  Each $X_n$ and $Y_n$ is in $\alg\N$ since
  $X_n = N_x X_n N_x^\perp$ and $Y_n = N_y Y_n N_y^\perp$, and so
  $X := \sum_n X_n$ and $Y := \sum_n Y_n$ converge strongly to operators in $\alg\N$.
  For any $m,n$ we have $E_m X E_n$ is zero unless $m=i$, in which case
  $i_x(E_i X E_n) = i_x(X_n) = 0$, since $X_n$ is finite rank. Thus
  $i_x(E_m X E_n)$ is zero for all $m,n$ and $x\in[0,1]$ so $X\in\T(\bfS)$ and,
  similarly, $Y \in\T(\bfS)$. On the other hand
  $XY = E_iXYE_j = \sum_n \alpha_n\otimes \gamma_n^*$. Since
  $\|(N_{t_n}-N_{s_n})XY(N_{t_n}-N_{s_n})\| \ge 1$ for all $n$ it follows
  $i_x(E_iXYE_j) \ge 1$ for all $x$ and so $\T(\bfS)$ cannot be closed under
  multiplication.
\end{proof}

Moreover, as we shall see in Proposition~\ref{triangular-contained-in-ts},
every maximal triangular algebra,
$\T$, satisfying $\alg{\N_0}\otimes\D_0 \subseteq \T \subseteq \alg{\N_0}\otimes\B(\K)$
is contained in $\T(\bfS)$ for some triangular system $\bfS$. Thus it makes sense to seek
additional constraints on the elements of $\T(\bfS)$ which will determine a
maximal triangular algebra. In the following two definitions we
introduce the properties, related to ``behaviour at infinity'' of block operator
matrices which enable us to specify triangular algebras.

\begin{definition}\label{liminal-seminorm-def}
  Let $M_i := \sum_{i=1}^n E_i$.
  For $X\in\alg{\N}$, $t\in[0,1]$, and $i, j \in \NN$, define the
  \emph{liminal row seminorm}
  \[
    r_{i,t}(X) := \lim_{k\rightarrow\infty} i_t(E_i X M_k^\perp)
  \]
  and the corresponding \emph{liminal column seminorm}
  \[
  c_{j,t}(X) := \lim_{k\rightarrow\infty} i_t( M_k^\perp X E_j)
  \]
\end{definition}

\begin{remark}
  Despite superficial appearances, the values of $r_{i,t}$ and $c_{j,t}$ do not
  depend on the ordering of the atoms of $\D$.
\end{remark}

The proof of the following basic properties of the liminal seminorms is routine,
and left to the reader.

\begin{lemma}
  For fixed $i,j$ and $t\in[0,1]$, the functions $r_{i,t}$ and $c_{j,t}$
  are seminorms on $\alg\N$.
  For fixed $i,j$ and $X\in\alg\N$, the functions $r_{i,t}(X)$ and $c_{j,t}(X)$
  are upper semicontinuous functions of $t\in[0,1]$.
\end{lemma}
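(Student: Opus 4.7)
The plan is to verify three things: (a) that the limits defining $r_{i,t}(X)$ and $c_{j,t}(X)$ actually exist; (b) that for fixed $i,j,t$ they are seminorms on $\alg\N$; and (c) that for fixed $i,j,X$ they are upper semicontinuous in $t$. All three will follow mechanically from the corresponding properties of $i_t$ recorded in Lemma~\ref{properties-of-i}, together with the fact that $M_k^\perp$ is a decreasing sequence of projections converging strongly to $0$.

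For existence of the limit I would observe that, since $M_k^\perp$ is decreasing, $E_iXM_{k+1}^\perp = (E_iXM_k^\perp)M_{k+1}^\perp$. Because $i_t$ is submultiplicative and $i_t(M_{k+1}^\perp) \le \|M_{k+1}^\perp\| \le 1$, the numerical sequence $i_t(E_iXM_k^\perp)$ is monotonically non-increasing and bounded below by $0$; hence the limit exists and equals the infimum. The case of $c_{j,t}$ is identical after multiplying on the left.

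For the seminorm property, each map $X \mapsto E_iXM_k^\perp$ is linear, so $X \mapsto i_t(E_iXM_k^\perp)$ is a seminorm for every $k$. The triangle inequality and absolute homogeneity pass to the limit in $k$, yielding that $r_{i,t}$ is a seminorm, and $c_{j,t}$ by symmetry. For upper semicontinuity in $t$, Lemma~\ref{properties-of-i} says each $t\mapsto i_t(E_iXM_k^\perp)$ is upper semicontinuous, and since $r_{i,t}(X) = \inf_k i_t(E_iXM_k^\perp)$ by the monotone convergence just established, one has
\[
  \{t\in[0,1] : r_{i,t}(X) < \alpha\} \;=\; \bigcup_{k\in\NN} \{t\in[0,1] : i_t(E_iXM_k^\perp) < \alpha\},
\]
a union of open sets, hence open. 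Thus $t\mapsto r_{i,t}(X)$ is upper semicontinuous, and likewise $t\mapsto c_{j,t}(X)$.

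There is no real obstacle here; the only point that requires any thought is the existence of the limit, and that is just the remark that applying a submultiplicative seminorm to a product with a contractive projection cannot increase the value. The rest is bookkeeping, which presumably is why the authors leave the verification to the reader.
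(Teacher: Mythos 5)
Your proof is correct and is exactly the routine verification the paper intends when it leaves this lemma to the reader: existence of the limit via monotonicity from submultiplicativity of $i_t$, the seminorm axioms passing to the limit, and upper semicontinuity of an infimum of upper semicontinuous functions via the openness of the strict sublevel sets. Nothing is missing.
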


We now add extra properties to the definition of triangular system, which will
enable us to specify a triangular algebra, as seen in the following theorem.

\begin{definition}\label{definition-of-extended-triangular-system}
  Let $\bfS = (S_{i,j})_{i,j\in\NN}$,
  $\bfR = (R_i)_{i\in\NN}$, and $\bfC = (C_j)_{j\in\NN}$ be collections of Borel
  subsets of $[0, 1]$ satisfying:
  \begin{enumerate}
    \item
      \label{definition-of-extended-triangular-system--first-item}
      \label{definition-of-extended-triangular-system--reflexive}
      $S_{i,i} = [0,1]$ for all $i\in\NN$
    \item
      \label{definition-of-extended-triangular-system--antisymmetric}
      $S_{i,j} \cap S_{j,i} = \emptyset$ for all $i\not= j$ in $\NN$
    \item
      \label{definition-of-extended-triangular-system--transitive}
      $S_{i,j} \cap S_{j,k}\subseteq S_{i,k}$ for all $i,j,k\in\NN$
    \item
      \label{definition-of-extended-triangular-system--column}
      $C_i \cap S_{i,j} \subseteq C_j$ for all $i,j\in\NN$
    \item
      \label{definition-of-extended-triangular-system--second-to-last-item}
      \label{definition-of-extended-triangular-system--row}
      $S_{i,j} \cap R_j \subseteq R_i$ for all $i,j\in\NN$
    \item
      \label{definition-of-extended-triangular-system--last-item}
      \label{definition-of-extended-triangular-system--row-column}
      $R_i \cap C_j \subseteq S_{i,j}$ for all $i,j\in\NN$
  \end{enumerate}
  Then the triple $(\bfS, \bfR, \bfC)$ is called an \emph{extended triangular system}.
\end{definition}

\begin{definition}\label{define-tsrc}
  Given collections of Borel sets, $\bfS = (S_{i,j})_{i,j\in\NN}$,
  $\bfR = (R_i)_{i\in\NN}$, and $\bfC = (C_j)_{j\in\NN}$, we shall write
  $\T(\bfS, \bfR, \bfC)$ for the set of all $X\in\alg{\N}$ such that
  \begin{enumerate}
    \item\label{sij-criterion}
      $i_t(E_i X E_j) = 0$ for each $1 \le i,j < \infty$ and a.e. $t\not\in S_{i,j}$
    \item\label{ri-criterion}
      $r_{i,t}(X) = 0$ for each $1 \le i < \infty$ and a.e.\ $t\not\in R_i$
    \item\label{cj-criterion}
      $c_{j,t}(X) = 0$ for each $1 \le j < \infty$ and a.e.\ $t\not\in C_j$
  \end{enumerate}
\end{definition}

\begin{theorem}\label{extended-triangular-systems-make-tsrc-triangular}
  If $(\bfS, \bfR, \bfC)$ is an extended triangular system
  then $\T(\bfS, \bfR, \bfC)$ is a triangular algebra.
\end{theorem}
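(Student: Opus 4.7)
The plan is to check the three defining properties of a triangular algebra. Linearity and norm-closedness of $\tsrc$ are immediate, since each of $X\mapsto i_t(E_iXE_j)$, $X\mapsto r_{i,t}(X)$, $X\mapsto c_{j,t}(X)$ is a norm-continuous seminorm, and the defining a.e.-vanishing conditions cut out a countable intersection of norm-closed subspaces. For the diagonal: $\tsrc\subseteq\T(\bfS)$, so Proposition~\ref{ts-is-a-triangular-space} gives $\tsrc\cap\tsrc^*\subseteq\N_0'\otimes\D_0$; conversely, for any $D\in\N_0'\otimes\D_0$, commutation with each $E_i$ yields $E_iDM_n^\perp=0$ for $n\ge i$, $M_n^\perp DE_j=0$ for $n\ge j$, and $E_iDE_j=0$ for $i\ne j$, so all three conditions of Definition~\ref{define-tsrc} hold trivially.

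The main obstacle is closure under multiplication---the axioms \ref{definition-of-extended-triangular-system--column}--\ref{definition-of-extended-triangular-system--row-column} of the extended triangular system exist precisely because, as Proposition~\ref{ts-is-a-triangular-space} shows, the $\bfS$-data alone cannot control the asymptotic tails of products. For $X,Y\in\tsrc$, my plan is to use the finite/infinite cutoff decomposition
\[
  E_iXYE_j = \sum_{l=1}^n E_iXE_l\,YE_j + (E_iXM_n^\perp)(M_n^\perp YE_j),
\]
together with subadditivity and submultiplicativity of $i_t$ from Lemma~\ref{properties-of-i}. Since $M_{n+1}^\perp\le M_n^\perp$, submultiplicativity forces $i_t(E_iXM_n^\perp)$ and $i_t(M_n^\perp YE_j)$ to be monotone decreasing in $n$, with limits $r_{i,t}(X)$ and $c_{j,t}(Y)$ respectively. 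Letting $n\to\infty$ should then yield
\[
  i_t(E_iXYE_j) \le \sum_{l=1}^\infty i_t(E_iXE_l)\,i_t(E_lYE_j) + r_{i,t}(X)\,c_{j,t}(Y).
\]

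From this inequality, condition~\ref{sij-criterion} for $XY$ follows: for a.e.\ $t\notin S_{i,j}$, axiom~\ref{definition-of-extended-triangular-system--transitive} ($S_{i,l}\cap S_{l,j}\subseteq S_{i,j}$) kills each summand and axiom~\ref{definition-of-extended-triangular-system--row-column} ($R_i\cap C_j\subseteq S_{i,j}$) kills the tail, after discarding a countable union of null exceptional sets. For condition~\ref{ri-criterion}, the analogous cutoff for $E_iXYM_k^\perp$---taking $k\to\infty$ first (bounding the tail by $i_t(M_n^\perp YM_k^\perp)\le\|Y\|$) and then $n\to\infty$---should give
\[
  r_{i,t}(XY) \le \sum_{l=1}^\infty i_t(E_iXE_l)\,r_{l,t}(Y) + r_{i,t}(X)\,\|Y\|,
\]
from which a.e.\ vanishing outside $R_i$ follows via axiom~\ref{definition-of-extended-triangular-system--row} ($S_{i,l}\cap R_l\subseteq R_i$). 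Condition~\ref{cj-criterion} is symmetric and uses axiom~\ref{definition-of-extended-triangular-system--column} ($C_l\cap S_{l,j}\subseteq C_j$). The only real subtlety I anticipate is justifying the iterated limits in $n$ and $k$, but monotonicity in $n$ and the uniform bound $\|Y\|$ in $k$ should render this routine, with no machinery required beyond Lemma~\ref{properties-of-i} and the six axioms themselves.
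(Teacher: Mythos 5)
Your proof is correct and follows essentially the same route as the paper: the same finite/tail decomposition $E_iXYE_j=\sum_{l\le n}E_iXE_lYE_j+(E_iXM_n^\perp)(M_n^\perp YE_j)$, with sub\-additivity and submultiplicativity of $i_t$, and the six axioms invoked in exactly the same places. The only (immaterial) difference is that you pass to a pointwise limit in $n$ and discard countably many null sets, where the paper integrates over $S_{i,j}^c$ and applies the Dominated Convergence Theorem.
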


\begin{definition}\label{define-simple-algebra}
  The algebras $\tsrc$ described in the last theorem are called
  the \emph{simple uniform triangular algebras}.
\end{definition}

\begin{proof}[Proof of Theorem \ref{extended-triangular-systems-make-tsrc-triangular}]
  By the same techniques as Proposition~\ref{ts-is-a-triangular-space}, and since $r_{i,t}$ and
  $c_{j,t}$ are seminorms, $\tsrc$ is a triangular space. It remains to show it is closed
  under multiplication. Let $X, Y \in\tsrc$ and verify criteria
  (\ref{sij-criterion}), (\ref{ri-criterion}), and (\ref{cj-criterion}) for $XY$.

  To verify (\ref{sij-criterion}), fix $i$ and $j$ and consider
  \begin{align*}
    i_t(E_i XY E_j)
      &\le \sum_{k=1}^r i_t(E_iXE_kYE_j) + i_t(E_iXM_r^\perp YE_j) \\
      &\le \sum_{k=1}^r i_t(E_iXE_k) i_t(E_kYE_j) + i_t(E_iXM_r^\perp) i_t(M_r^\perp YE_j)
  \end{align*}
  The terms in the sum are zero almost everywhere outside $S_{i,j}$ and the
  remainder term converges to zero (as $r\rightarrow\infty$) for almost every
  $t$ outside $R_i\cap C_j\subseteq S_{i,j}$.
  Integrate (wrt $t$) over $S^c_{i,j}$ and apply the Dominated Convergence Theorem to the
  limit as $r\rightarrow\infty$ to see that $i_t(E_i XY E_j) = 0$ for almost all
  $t\not\in S_{i,j}$.

  Verify (\ref{ri-criterion}) in the same way by considering the inequality
  \begin{align*}
    i_t(E_i XY M_j^\perp)
      &\le \sum_{k=1}^r i_t(E_iXE_kYM_j^\perp) + i_t(E_iXM_r^\perp YM_j^\perp) \\
      &\le \sum_{k=1}^r i_t(E_iXE_k) i_t(E_kYM_j^\perp) + i_t(E_iXM_r^\perp) \|Y\|
  \end{align*}
  from which, taking $j\rightarrow\infty$,
  \begin{equation*}
    r_{i,t}(XY) \le \sum_{k=1}^r i_t(E_iXE_k) r_{k,t}(Y) + i_t(E_iXM_r^\perp) \|Y\|
  \end{equation*}
  The terms in the sum are zero almost everywhere outside $S_{i,k} \cap R_k\subseteq R_i$
  and the remainder term converges to zero (as $r\rightarrow\infty$) for almost every
  $t$ outside $R_i$. Thus similarly by the Dominated Convergence Theorem
  $r_{i,t}(XY) = 0$ on $R_i^c$.
  The case of criterion~(\ref{cj-criterion}) is analogous.
\end{proof}

We now present a key observation which relates our construction, with families of
measurable sets, to partial orders, something to be expected in the context of
triangular algebras.
Note that an extended triangular system induces a set of partial orders and ``Dedekind cuts''
on $\NN$. More precisely, for each fixed $x\in[0,1]$, we define a partial order on $\NN$
by $i \preceq j$ if $x \in S_{i,j}$ and let $A = \{ i\in\NN \st x\in C_i\}$ and
$B = \{ i\in\NN \st x\in R_i \}$. Note that $A$ is an increasing set, since if $i\in A$
and $i\preceq j$ then $x\in C_i$ and $x\in S_{i,j}$, so that
$x\in C_i\cap S_{i,j}\subseteq C_j$ so that $j\in A$. In the same way, one
sees that $B$ is a decreasing set and that every element in $A$ dominates every element
in $B$ (i.e., $b\preceq a$). Although the pair $(A, B)$ is not exactly a Dedekind cut ---
most importantly it does not always partition $\NN$ --- we shall continue to employ the
terminology because of the unmistakable similarities and the fact that, like a true
Dedekind cut, this pair of sets does indicate the behavior at a missing or virtual point,
in our case the asymptotic point at infinity.

\begin{definition}
  In our context, a Dedekind Cut on a partially ordered set is a pair of subsets $(A, B)$
  such that: $A$ is increasing, $B$ is decreasing, and every element of $A$ dominates
  every element of $B$ (though not strictly).
\end{definition}

We shall see below (Theorem~\ref{maximal-algebra-thm}) that if this induced set of partial
orders and Dedekind cuts is maximal, then $\T(\bfS, \bfR, \bfC)$ is a maximal triangular
algebra. We collect a few simple facts about this ordering/cuts viewpoint
in the following lemmas:

\begin{lemma}\label{properties-of-maximal-extended-triangular-systems}
  Let $(\bfS,\bfR, \bfC)$ be an extended triangular system and for each $x\in[0,1]$ let
  $\preceq_x$ and $(A_x, B_x)$ be the partial order and Dedekind cut induced on $\NN$
  in the context of $x$, as described above. Then $(\bfS,\bfR, \bfC)$ is maximal
  (in the sense that none of the sets in $\bfS$, $\bfR$, or $\bfC$ can be enlarged
  without violating the requirements of a triangular system) iff
  for each $x\in[0,1]$,
    $\preceq_x$ is a linear order,
    $A_x\cup B_x =\NN$,
    and either
      (a)~$A_x$ has no smallest element and $B_x$ has no greatest element, or
      (b)~$\min{A_x}$ and $\max{B_x}$ both exist, and are equal.
\end{lemma}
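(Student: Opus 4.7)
My approach is to reduce maximality of the Borel system to a pointwise combinatorial question. Call a local configuration $(P,A,B)$ on $\NN$ satisfying the pointwise analogues of (1)--(6) \emph{combinatorially maximal} if no triple $(P',A',B')$ with $P'\supseteq P$, $A'\supseteq A$, $B'\supseteq B$ and at least one strict inclusion also satisfies (1)--(6). I would first verify that $(\bfS,\bfR,\bfC)$ is maximal iff $(\preceq_x,A_x,B_x)$ is combinatorially maximal for every $x\in[0,1]$: a proper Borel enlargement must differ at some point, and conversely a pointwise extension at $x$ can be installed by adjoining the Borel singleton $\{x\}$ to the relevant members of $\bfS,\bfR,\bfC$ without perturbing any other point.

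The forward direction then breaks into three sub-claims. First, suppose $P$ is not linear, with $i\ne j$ incomparable. Condition~(6) precludes both $i\in A,\, j\in B$ and $i\in B,\, j\in A$ (either would force $i$ and $j$ to be $P$-comparable), so in the remaining cases I form $P'$ as the transitive closure of $P\cup\{(i,j)\}$ and enlarge $A$ by $\{\ell: j\preceq_P\ell\}$ when $i\in A$ and $B$ by $\{k: k\preceq_P i\}$ when $j\in B$. Antisymmetry of $P'$ uses incomparability: any reverse pair produced by the closure would chain up to yield $j\preceq_P i$. The remaining axioms then hold by direct verification. Second, if $P$ is linear and $j\notin A\cup B$, then $j\notin B$ combined with downward-closure of $B$ in a linear order forces $b\prec j$ for every $b\in B$, so $A':=A\cup\{\ell:\ell\succeq_P j\}$ gives a proper extension satisfying~(6). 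Third, with $P$ linear and $A\cup B=\NN$, I case-analyze $\min A$ and $\max B$: if both exist but differ, $A\cup B=\NN$ together with~(6) forces $\max B$ to be the immediate predecessor of $\min A$, so $\max B$ can be adjoined to $A$; if exactly one of them exists, an analogous single-element extension works. Only cases~(a) and~(b) of the lemma survive.

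For the converse, assume the three conditions. Linearity rules out adjoining new comparabilities. In case~(b) with $\min A=\max B=a_0$, any $j\in B\setminus\{a_0\}$ has $j\prec a_0\in B$, so $j$ is not an upper bound of $B$ and adding $j$ to $A$ would violate~(6) with $a_0$; the situation for $B$ is symmetric. In case~(a), with $A\cap B=\emptyset$ (else the common element would be both $\min A$ and $\max B$), any $j\in B$ has some $j'\in B$ with $j'\succ j$ since $B$ has no maximum, so $j$ again fails to dominate $B$; symmetrically for $B$. Hence no proper extension is possible. The main obstacle I anticipate is the cut-condition case analysis in the forward direction, particularly the ``jump'' case, where one must combine $A\cup B=\NN$ with the definition of $\min A$ to conclude $\max B$ is the immediate predecessor of $\min A$ and thereby pinpoint $\max B$ as the single element whose adjunction to $A$ yields a strict extension; the remaining verifications are routine work with transitivity and upward/downward-closure.
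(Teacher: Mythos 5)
Your proposal is correct and follows essentially the same route as the paper: reduce maximality to the pointwise combinatorics of the orders $\preceq_x$ and cuts $(A_x,B_x)$, establish linearity, $A_x\cup B_x=\NN$, and the $\min A_x$/$\max B_x$ dichotomy by exhibiting proper extensions when they fail, and prove the converse by showing no single new comparability or cut element can be added. The only differences are cosmetic — you extend a non-linear order by one pair rather than to a full linear order, and argue the converse directly rather than by contrapositive.
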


\begin{proof}
  First suppose $(\bfS,\bfR, \bfC)$ is maximal.

  Fix an arbitrary $x\in[0,1]$.
  Even if $\preceq_x$ were not linear, it
  could at least be extended to a linear order $\preceq'_x$ on $\NN$.
  Enlarge $A_x$ and $B_x$ to
  \[
    A'_x := \{i \st \exists a\in A_x \text{ with } a\preceq'_x i\}
    \text{ and }
    B'_x := \{i \st \exists b\in B_x \text{ with } i\preceq'_x b\}
  \]
  Clearly $A'_x$ is increasing, $B'_x$ is decreasing, and if $a'\in A'_x$ and $b'\in B'_x$
  then there are $a\in A_x$ and $b\in B_x$ with
  \[
    b' \preceq'_x b \preceq_x a \preceq'_x a'
  \]
  and so $(A'_x, B'_x)$ is a Dedekind cut. Enlarge $(\bfS,\bfR, \bfC)$ accordingly
  (i.e., put $x$ in $S'_{i,j}$ whenever $i\preceq'_x j$, etc.) and so by maximality
  each $S'_{i,j} = S_{i,j}$ and so $\preceq_x$ is equal to $\preceq'_x$ and so it linear.

  Now suppose there is a $c\not\in A_x\cup B_x$ for some $x$.
  Since $\preceq_x$ is a linear order it follows that
  $b\prec_x c\prec_x a$ for every $a\in A_x$ and $b\in B_x$. Take
  \[
    A'_x := \{ i \st i \succeq_x c  \}
    \text{ and }
    B'_x := \{ i \st i \preceq_x c  \}
  \]
  Clearly $(A'_x, B'_x)$ is a Dedekind cut and $A_x\subseteq A_x'$ and $B_x\subseteq B_x'$.
  But, having enlarged $(A_x, B_x)$ we can correspondingly enlarge
  $(\bfS,\bfR, \bfC)$ by putting $x$ in some of the $R_i$ and $C_j$,
  contrary to supposition. Thus $A_x \cup B_x = \NN$ for all $x$.

  Finally fix $x$ and consider two cases based on whether or not $A_x\cap B_x$ is empty.
  If it is empty then $A_x$ cannot have a smallest element, for if it did then by maximality
  $B_x$ would have to be $\{i \st i \preceq_x \min{A_x} \}$, which would meet $A_x$.
  Likewise, $B_x$ cannot have a greatest element. On the other hand, if $c\in A_x \cap B_x$
  then by maximality $A_x = \{i \st i \succeq_x c \}$ and
  $B_x = \{i \st i \preceq_x c \}$, hence $\min{A_x} = \max{B_x} = c$.

  To prove the converse, now suppose that $(\bfS,\bfR, \bfC)$ is not maximal.
  Suppose further that for all $x\in[0,1]$, both $\preceq_x$ is linear
  (so that $S_{i,j} = S_{j,i}^c$ for all $i, j$)
  and $A_x\cup B_x = \NN$.
  Since $(\bfS,\bfR, \bfC)$ is not maximal find $(\textbf{S}',\textbf{R}',\textbf{C}')$
  which strictly extends $(\bfS,\bfR, \bfC)$. Since $\textbf{S}$ is maximal,
  one of $\textbf{R}',\textbf{C}'$ must be bigger. Without loss, assume
  $C'_i$ is a proper superset of $C_i$ and let $x\in C'_i\setminus C_i$.
  Thus $i\in A'_x\setminus A_x$ and so $b\preceq_x i \prec_x a$ for all
  $a\in A_x$ and $b\in B_x$. Thus on the one hand case~(b) is impossible.
  On the other, $i$ must belong to one of $A_x$ or $B_x$ by supposition,
  and clearly $i\not\in A_x$, so $i\in B_x$ and so $B_x$ does have a greatest element.
  Thus case~(a) fails too.
  By contrapositive, if for all $x$, the ordering $\preceq_x$ is linear,
  $A_x\cup B_c = \NN$, and
  one of case~(a) or case~(b) holds, then  $(\bfS,\bfR, \bfC)$ must be maximal.
\end{proof}

\begin{lemma}
  Every extended triangular system can be enlarged to a maximal extended triangular system.
\end{lemma}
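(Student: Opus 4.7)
My plan is to apply Zorn's Lemma to the partially ordered set $\P$ of all extended triangular systems that extend the given $(\bfS_0, \bfR_0, \bfC_0)$, ordered by componentwise inclusion: declare $(\bfS, \bfR, \bfC) \le (\bfS', \bfR', \bfC')$ iff $S_{i,j} \subseteq S'_{i,j}$, $R_i \subseteq R'_i$, and $C_j \subseteq C'_j$ for all $i,j\in\NN$. The set $\P$ is nonempty, containing $(\bfS_0, \bfR_0, \bfC_0)$ itself.

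The central step is to verify Zorn's hypothesis that every chain in $\P$ has an upper bound. Given a chain $\{(\bfS^\alpha, \bfR^\alpha, \bfC^\alpha)\}_{\alpha\in I}$, I would propose the componentwise unions $\hat S_{i,j} := \bigcup_{\alpha\in I} S^\alpha_{i,j}$, $\hat R_i := \bigcup_{\alpha\in I} R^\alpha_i$, and $\hat C_j := \bigcup_{\alpha\in I} C^\alpha_j$. To confirm this gives an element of $\P$, I check conditions (1)--(6) of Definition~\ref{definition-of-extended-triangular-system}. Each of these is a \emph{local} assertion involving at most finitely many of the constituent sets at a single point $x$. For instance, to verify $\hat S_{i,j} \cap \hat S_{j,k} \subseteq \hat S_{i,k}$, observe that any $x$ in the intersection belongs to $S^\alpha_{i,j}$ and to $S^\beta_{j,k}$ for some $\alpha, \beta \in I$; using the chain property, at the larger of these two indices, call it $\gamma$, the point $x$ lies in both $S^\gamma_{i,j}$ and $S^\gamma_{j,k}$, hence in $S^\gamma_{i,k} \subseteq \hat S_{i,k}$. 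The disjointness condition (2) is handled by the same "pass to a common index" argument, yielding a contradiction. Conditions (4)--(6) are analogous to (3).

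Once the chain has an upper bound, Zorn's Lemma produces a maximal element of $\P$, which is the desired maximal extended triangular system extending $(\bfS_0, \bfR_0, \bfC_0)$.

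The main technical obstacle is measure-theoretic: the union of an uncountable chain of Borel sets need not be Borel. This is most cleanly addressed by enlarging the allowed class to Lebesgue-measurable (or universally measurable) sets, which costs nothing in the applications because Definition~\ref{define-tsrc} only refers to the sets $S_{i,j}, R_i, C_j$ through almost-everywhere conditions. Alternatively, one can combine the Zorn argument with a Borel selection theorem applied to the pointwise characterization of maximality given by Lemma~\ref{properties-of-maximal-extended-triangular-systems}, choosing for each $x$ a maximal linear extension of $\preceq_x$ and a compatible Dedekind cut in a Borel-measurable fashion.
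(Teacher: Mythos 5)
Your Zorn skeleton is combinatorially sound: the six conditions of Definition~\ref{definition-of-extended-triangular-system} are local and finitary, so the componentwise union of a chain of systems satisfying them again satisfies them by the ``pass to a common index'' argument, and a Zorn-maximal element is maximal in the paper's sense. The genuine gap is in your repair of the measurability problem. Passing from Borel to Lebesgue-measurable (or universally measurable) sets does \emph{not} fix it: the union of an uncountable chain of Lebesgue-measurable sets need not be Lebesgue-measurable. For instance, well-order a non-measurable subset of $[0,1]$ of cardinality $\aleph_1$ in order type $\omega_1$; the proper initial segments are countable, hence null and measurable, they form a chain under inclusion, and their union is the non-measurable set. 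So the upper bound you propose for a chain may fail to be measurable in any useful sense, and the Zorn argument collapses at exactly the point where you claim it is rescued. Your alternative suggestion --- a Borel selection of a maximal linear extension of $\preceq_x$ and a compatible cut, uniformly in $x$ --- is much closer to something that could work, but as stated it is only a gesture: you would need to exhibit the relevant multifunction (from $x$ to the nonempty closed set of maximal extensions and cuts in a suitable Polish space) as Borel-measurable before invoking, say, Kuratowski--Ryll-Nardzewski, and none of that is carried out.

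The paper sidesteps Zorn entirely for precisely this reason. It enlarges the system in a countable sequence of \emph{deterministic} steps, each given by an explicit Boolean formula in the existing sets --- e.g.\ enumerating pairs $(i_0,j_0)$ and setting $S'_{i_0,j_0} := S_{j_0,i_0}^c$, $S'_{i,j_0} := S_{i,j_0}\cup(S_{i,i_0}\cap S'_{i_0,j_0})$, $R'_i := \bigcup_b S'_{i,b}\cap R_b$, and similarly for the cuts --- so that Borel-ness is manifestly preserved at every stage, and then takes countable unions at the end. If you want to salvage your argument, you should either carry out the measurable-selection route in detail or replace the chain-union step by such an explicit countable iteration.
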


\begin{proof}
  A routine Zorn's Lemma argument would be enough to see the result, except that we
  must maintain measurability of the sets. For that we will need to enlarge the sets
  in a series of deterministic steps.

  First we shall extend $\preceq_x$ to a linear order for all $x$. Enumerate all the
  pairs $(i_0,j_0)$ in a fixed order and run through them.
  Whenever we come to a pair $(i_0, j_0)$ for which there are $x$ with
  $i_0 \not\preceq_x j_0$ and $j_0 \not\preceq_x i_0$, we extend $\preceq_x$
  to $\preceq'_x$ by declaring $i_0 \preceq'_x j_0$ and, consequently,
  $i \preceq'_x j_0$ for all $i\preceq_x i_0$
  and $j\succeq'_x i_0$ for all $j\succeq_x j_0$.
  This translates to enlarging
  $S_{i_0,j_0}$ to
  $S'_{i_0, j_0} := S_{i_0, j_0} \cup (S_{i_0, j_0}\cup S_{j_0, i_0})^c = S_{j_0, i_0}^c$
  and
  \begin{eqnarray*}
    S'_{i, j_0} &:=& S_{i, j_0} \cup (S_{i, i_0} \cap S'_{i_0, j_0}) \\
    S'_{i_0, j} &:=& S_{i_0, j} \cup (S'_{i_0, j_0} \cap S_{j_0, j})
  \end{eqnarray*}
  Likewise enlarge $A_x$ and $B_x$ to
  \[
    \{i \st a\preceq'_x i \text { for some $a\in A_x$}\}
    \text{ and }
    \{i \st i\preceq'_x b \text { for some $b\in B_x$}\}
  \]
  respectively. This translates to
  \begin{eqnarray*}
    R'_i &:=& \bigcup_{b\in\NN} S'_{i, b} \cap R_b \\
    C'_i &:=& \bigcup_{a\in\NN} C_a \cap S'_{a, i}
  \end{eqnarray*}
  for each $i$.
  At each stage all the sets $(\bfS, \bfR, \bfC)$ grow measurably, and continue to
  be an extended triangular system. So finally replace each set with the
  union of all the intermediate versions and we obtain an extended triangular
  system in which each $\preceq_x$ is linear.

  In a similar way, we shall enlarge $\bfR$ and $\bfC$ to be maximal.
  For each $x$, either $(A_x, B_x)$ is already maximal, or else there is
  an $i_0$ such that
 \begin{equation}\label{enlarge-A}
   A'_x := \{ i \st i \succeq_x i_0\} \supseteq A_x
 \end{equation}
 and
 \begin{equation}\label{enlarge-B}
   B'_x := \{ i \st i \preceq_x i_0\} \supseteq B_x
 \end{equation}
 and $(A'_x, B'_x)$ is a maximal Dedekind cut. Turning this around, we shall
 take each value of $i_0$ in turn and enlarge $\bfR$ and $\bfC$ by those $x$
 for which \ref{enlarge-A} and \ref{enlarge-B} hold. Thus, fix $i_0$ and for each $i$,
 replace $R_i$ with $R_i \cup (S_{i, i_0} \cap (R_{i_0} \cup C_{i_0})^c)$ and $C_i$
 with $C_i \cup (S_{i_0, i} \cap (R_{i_0} \cup C_{i_0})^c)$. Repeat this process
 for each $i_0$, and take the union of the successively enlarged sets.
\end{proof}

The next lemma provides technical results necessary to establish
Theorem~\ref{maximal-algebra-thm}, that the triangular algebras associated with maximal
extended triangular systems are themselves \emph{maximal} triangular algebras.
The lemma will enable us to see that the presence of
operators which violate the constraints of $\tsrc$ leads to violations
of triangularity.

\begin{lemma}\label{row-column-lemma}
  Suppose $X\in\alg{\N}$ and for some $i\in\NN$, $a>0$, and a fixed closed
  $K \subseteq [0,1]$, we have $r_{i,t}(X) \ge a$ for all $t\in K$.
  Let $j\in\NN$. Then there are $A, B \in\alg\N$ satisfying $A=E_iAE_i$ and $B=B E_j$,
  and such that
  \begin{enumerate}
    \item $AXB = E(K) E_{i,j}$
    \item $i_t(B) = 0$ for all $t\not\in K$
    \item $i_t(E_m B) = 0$ for all $t\in[0,1]$ and $m\in\NN$.
  \end{enumerate}
\end{lemma}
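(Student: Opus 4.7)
The idea is to first build an auxiliary $B_0$ satisfying conditions (2), (3) while arranging $i_t(E_iXB_0)\ge a$ on $K$, and then finish via the Interpolation Theorem applied inside the $(i,j)$-block of $\alg{\N}$. To construct $B_0$, enumerate a countable dense subset $\{t_n\}\subseteq K$ and pick a sequence $\e_n\downarrow 0$. Inductively choose integers $k_1<L_1<k_2<L_2<\cdots$ and intervals $(s_n,r_n'')\ni t_n$ with $r_n''-s_n\to 0$. At stage $n$, the hypothesis $r_{i,t_n}(X)\ge a$ gives $i_{t_n}(E_iXM_{k_n}^\perp)\ge a$, which supplies some $s_n<t_n<r_n\le r_n''$ and a unit vector $\eta\in(N_{r_n}-N_{s_n})M_{k_n}^\perp\H$ with $\|(N_{r_n}-N_{s_n})E_iX\eta\|\ge a-\e_n/2$. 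Truncate: set $\eta_n:=M_{L_n}\eta/\|M_{L_n}\eta\|$ for $L_n$ (depending on $\eta$) so large that $\|(N_{r_n}-N_{s_n})E_iX\eta_n\|\ge a-\e_n$; then $\eta_n$ lives in the finite atom-block $(k_n,L_n]$, and taking $k_{n+1}>L_n$ makes the $\eta_n$'s pairwise orthogonal automatically. Pick a unit $\beta_n\in(N_{r_n''}-N_{r_n})E_j\H$ orthogonal to $\beta_1,\ldots,\beta_{n-1}$ (possible by infinite-dimensionality). Set $B_0:=\sum_n\eta_n\otimes\beta_n^*$; orthonormality makes the sum strongly convergent of norm $\le 1$, each summand lies in $\alg{\N}$ (via $\eta_n\in N_{r_n}$, $\beta_n\in N_{r_n}^\perp$), so $B_0\in\alg{\N}$ with $B_0=B_0E_j$.

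\textbf{Verifying (2), (3) and the lower bound for $E_iXB_0$.} Condition (3) holds because at most one $n$ satisfies $k_n<m\le L_n$, making $E_mB_0$ a rank-at-most-one operator; rank-one operators in a continuous nest algebra have vanishing $i_t$ everywhere. For (2), a summand $\eta_n\otimes\beta_n^*$ contributes to $(N_r-N_s)B_0(N_r-N_s)$ only when $r_n\in(s,r)$; since $|r_n-t_n|\to 0$ with $t_n\in K$ closed, the cluster set of $\{r_n\}$ lies in $K$, so for $t\notin K$ some neighborhood of $t$ meets $\{r_n\}$ in a finite set, which can then be avoided by further shrinking $(s,r)$. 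For the lower bound: given $t\in K$ and $(s,r)\ni t$, density together with $r_n''-s_n\to 0$ produces arbitrarily large $n$ with $t_n\in(s,r)$ and $(s_n,r_n'')\subseteq(s,r)$; applying $(N_r-N_s)E_iXB_0(N_r-N_s)$ to the unit vector $\beta_n$ returns $(N_r-N_s)E_iX\eta_n$, whose norm exceeds $\|(N_{r_n}-N_{s_n})E_iX\eta_n\|\ge a-\e_n$ because $(N_r-N_s)\ge(N_{r_n}-N_{s_n})$; letting $\e_n\to 0$ gives $i_t(E_iXB_0)\ge a$ on $K$.

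\textbf{Finishing via block interpolation.} Under the identification $E_i\alg{\N}E_j\cong\alg{\N_0}$, the operator $Z:=E_iXB_0$ corresponds to some $Z_{ij}\in\alg{\N_0}$, and the $i_t$ seminorm is preserved under this identification. Apply Theorem~\ref{interpolation-theorem} inside $\alg{\N_0}$ to obtain $A',B'\in\alg{\N_0}$ with $A'Z_{ij}B'=E_{\N_0}(S)\supseteq E_{\N_0}(K)$; absorbing $E_{\N_0}(K)$ on both sides yields $\tilde A',\tilde B'\in\alg{\N_0}$ with $\tilde A'Z_{ij}\tilde B'=E_{\N_0}(K)$. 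Set $A:=\tilde A'\otimes(\e_i\otimes\e_i^*)\in E_i\alg{\N}E_i$ and $B:=B_0\cdot(\tilde B'\otimes(\e_j\otimes\e_j^*))$; a direct computation in block form gives $AXB=\tilde A'Z_{ij}\tilde B'\otimes(\e_i\otimes\e_j^*)=E(K)E_{i,j}$, confirming (1). Submultiplicativity of $i_t$ (Lemma~\ref{properties-of-i}) preserves (2) and (3) under right-multiplication by $\tilde B'\otimes(\e_j\otimes\e_j^*)$, since $i_t(E_mB)\le i_t(E_mB_0)\|\tilde B'\|=0$ and $i_t(B)\le i_t(B_0)\|\tilde B'\|=0$ for $t\notin K$.

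\textbf{Main obstacle.} The subtle step is the truncation in the construction of $\eta_n$. The hypothesis $r_{i,t_n}(X)\ge a$ controls the norm $\|(N_{r_n}-N_{s_n})E_iXM_{k_n}^\perp(N_{r_n}-N_{s_n})\|$ but gives no useful information about the analogous norm with $M_{k_n}^\perp$ replaced by $M_{L_n}-M_{k_n}$ as $L_n\to\infty$---in principle the relevant mass could sit only asymptotically. The workaround exploits that only a single witness vector is needed: choose $\eta\in M_{k_n}^\perp$ first, then truncate to $M_{L_n}\eta$ with $L_n$ chosen depending on $\eta$, relying on the norm convergence $M_L\eta\to\eta$ to preserve the estimate. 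This is what reconciles the interpolation lower bound with the pairwise orthogonality obtained from disjoint atom supports.
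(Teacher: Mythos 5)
Your proof is correct, and while its overall architecture parallels the paper's (rank-one sums localized in shrinking intervals around points of $K$, with supports escaping to infinity in the atom index to secure conditions (2) and (3), followed by an application of the Interpolation Theorem), the core technical step is handled genuinely differently. The paper builds \emph{both} factors explicitly: it notes that $\|Y_nM_k^\perp\|\ge a$ for all $k$ forces $\essnorm{Y_n}\ge a$, and then invokes a lemma from \cite{Orr:StIdNeAl} to extract bi-orthogonal sequences $\beta_n,\gamma_n$ with $\langle\beta_m,X\gamma_n\rangle=0$ for $m\ne n$, so that $AXB$ becomes a clean diagonal sum $\sum c_n\alpha_n\otimes\delta_n^*$ to which the Interpolation Theorem is applied. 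You instead construct only the right-hand factor $B_0$, using the truncation trick ($M_L\eta\to\eta$ in norm for a fixed witness vector $\eta$, with $L$ chosen after $\eta$) to place each witness in a finite atom block $(k_n,L_n]$, and then apply the Interpolation Theorem to $E_iXB_0$ inside the $(i,j)$-block to manufacture the left factor. This buys you independence from the cited essential-norm lemma and from any bi-orthogonalization of the pairing $\langle\cdot,X\cdot\rangle$ — the non-orthogonality of the vectors $E_iX\eta_n$ is irrelevant because you only need the lower bound $\|(N_r-N_s)E_iXB_0(N_r-N_s)\beta_n\|=\|(N_r-N_s)E_iX\eta_n\|$. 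Your identification of the truncation as the main obstacle is exactly right, and your resolution is sound.

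One small imprecision worth fixing: you enumerate a countable dense subset $\{t_n\}$ of $K$ and attach one interval to each point, so the argument that every neighbourhood of $t\in K$ contains $(s_{k},r_{k}'')$ for arbitrarily large $k$ fails at isolated points of $K$ (each such point is hit by exactly one $n$). Hence you only obtain $i_t(E_iXB_0)\ge a$ on $K$ minus its isolated points. This is harmless — the isolated points form a countable, hence null, set, so $E(K\cap S)=E(K)$ and the cut-down $\tilde A'Z_{ij}\tilde B'=E_{\N_0}(K)$ still goes through — but the paper's device of enumerating all intervals $((p-1)/q,(p+1)/q)$ that meet $K$ (so that every point of $K$ lies in infinitely many enumerated intervals of shrinking length) is designed precisely to avoid this caveat, and you should either adopt it or add the one-line remark that isolated points are null.
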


\begin{proof}
  Consider the intervals of the form $((p-1)/q, (p+1)/q)$
  for natural numbers $p<q$ and let $(s_n, t_n)$ be an enumeration
  of all such intervals which contain a point of $K$.
  Observe that $t_n-s_n\rightarrow 0$.
  For each $n$, choose $x_n$, $y_n$ and $z_n$ with
  $s_n < x_n < y_n < z_n < t_n$ and $y_n\in K$.
  Taking $Y_n := E_i(N_{z_n} - N_{x_n})X(N_{z_n} - N_{x_n})M_n^\perp$,
  note that since $\|Y_n M_k^\perp\| \ge a$ for all $k$, in particular
  the essential norm of each $Y_n$ is at least $a$.
  Thus, there are orthonormal sequences of vectors
  $\beta_n = E_i(N_{z_n} - N_{x_n})\beta_n$, and
  $\gamma_n = M_n^\perp (N_{z_n} - N_{x_n}) \gamma_n$ such that
  $\langle \beta_m, X\gamma_n \rangle = 0$ for all $m\not=n$,
  and
  $\langle \beta_n, X\gamma_n \rangle > a/2$ for all $n$
  (by, e.g., \cite{Orr:StIdNeAl} Lemma~2.2).
  In addition, pick orthonormal sequences
  $\alpha_n = E_i(N_{x_n} - N_{s_n})\alpha_n$, and
  $\delta_n = E_j(N_{t_n} - N_{z_n})\delta_n$, and
  let
  $A := \sum_n \alpha_n\otimes\beta^*_n$ and
  $B := \sum_n \gamma_n\otimes\delta^*_n$.
  Note that each summand of $A$ satisfies
  $\alpha_n\otimes\beta^*_n = N_{x_n}\alpha_n\otimes\beta^*_n N^\perp_{x_n}$
  so that $A\in\alg\N$. Similarly $B\in\alg\N$.

  Fix $x\not\in K$ and $s<x<t$, and choose $s<s'<x<t'<t$ such that
  $(s', t')$ is a positive distance from $K$. Then
  since $t_n-s_n \rightarrow 0$,
  $(s', t')$ is disjoint from all but finitely many $(s_n, t_n)$ and so
  $(N_{t'} - N_{s'})B(N_{t'} - N_{s'})$ is finite rank. Hence
  $i_x(B) = 0$. Likewise for any fixed $m$, $E_m\gamma_n = 0$ for all
  but finitely many $n$ so that $E_m B$ is finite rank and
  $i_x(E_m B)=0$ for all $x\in[0,1]$ and $m\in\NN$.

  On the other hand $AXB = E_i AXB E_j = \sum_n c_n \alpha_n\otimes\delta_n^*$
  where $c_n > a/2$. Fix $x\in K$ and $s<x<t$ and note there is an $n$
  such that $s<s_n<x<t_n<t$ so that
  $\|(N_t-N_s)AXB(N_t-N_s)\| > a/2$, and so $i_x(AXB) \ge a/2$ for all $x\in K$.
  By the Interpolation Theorem (Theorem~\ref{interpolation-theorem})
  there are $A'$ and $B'$ in $\alg\N$ such that
  $(A'A)X(BB')=E(K)$. Then $(E_iA'A)X(BB'E_{i,j}) = E(K)E_{i,j}$
  and the two other conditions hold for $E_iA'A$ and $BB'E_{i,j}$ by submultiplicativity
  of the diagonal seminorm.
\end{proof}

\begin{theorem}\label{maximal-algebra-thm}
  Let $(\bfS, \bfR, \bfC)$ be a maximal extended triangular system.
  Then $\T(\bfS, \bfR, \bfC)$ is a maximal triangular algebra.
\end{theorem}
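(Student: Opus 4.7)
The plan is to show maximality by proving that any triangular algebra $\T'$ properly containing $\tsrc$ leads to a contradiction. Since $\T' \cap \T'^* \supseteq \tsrc \cap \tsrc^* = \N_0' \otimes \D_0$, and the latter is already a masa, we automatically have $\T' \cap \T'^* = \N_0' \otimes \D_0$. The core of the proof is therefore, for each $X \in \T' \setminus \tsrc$, to construct $A, B \in \tsrc$, indices $i \neq j$, and a closed $K \subseteq [0,1]$ of positive measure with the property that $AXB = E(K) E_{i,j}$ and, independently, $E(K) E_{j,i} \in \tsrc$; the self-adjoint combination $E(K)(E_{i,j} + E_{j,i})$ then lies in $\T' \cap \T'^*$ but is visibly outside $\N_0' \otimes \D_0$, yielding the desired contradiction.

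Restricting to $X \in \alg\N$, I case-split on which of the three criteria (\ref{sij-criterion}), (\ref{ri-criterion}), (\ref{cj-criterion}) in Definition~\ref{define-tsrc} is violated. In case (\ref{sij-criterion}), the violation $i_t(E_i X E_j) \geq a > 0$ occurs on a closed subset of $S_{i,j}^c$; linearity of $\preceq_x$ from Lemma~\ref{properties-of-maximal-extended-triangular-systems} gives $S_{i,j}^c = S_{j,i}$ almost everywhere, so the violation sits inside $S_{j,i}$, and I will adapt the orthogonal-sequence construction in the proof of Lemma~\ref{row-column-lemma} to place all the chosen vectors inside $E_i$ and $E_j$, producing $A$ and $B$ supported in the diagonal blocks $(i,i)$ and $(j,j)$ (and hence automatically in $\tsrc$), then apply the Interpolation Theorem together with a right multiplication by $E_{i,j}$ to realize $AXB = E(K) E_{i,j}$. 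In case (\ref{ri-criterion}), the violation $r_{i,t}(X) \geq a$ occurs on a closed subset $K_0 \subseteq R_i^c$, and Lemma~\ref{row-column-lemma} applies directly provided its auxiliary column index $j$ is chosen so that $K \subseteq S_{j,i}$ (for $E(K) E_{j,i} \in \tsrc$) and simultaneously $K \subseteq C_j$ (for the Lemma's $B$ to have $c_{j,t}(B) = 0$ off $C_j$, placing it in $\tsrc$). Case (\ref{cj-criterion}) is symmetric: using $c_{j,t}(X) = r_{j,t}(X^*)$, I derive a column version of Lemma~\ref{row-column-lemma} by adjoint, and the dual constraints on the auxiliary row index $i$ become $K \subseteq S_{j,i}$ and $K \subseteq R_i$.

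The main obstacle will be the combinatorial index selection in cases (\ref{ri-criterion}) and (\ref{cj-criterion}). In case (\ref{ri-criterion}) I need to show that for each $x$ in the violation set the set $J_x = \{j \neq i : j \in A_x,\ j \preceq_x i\}$ is nonempty, and here the full force of the maximality of $(\bfS, \bfR, \bfC)$ enters via Lemma~\ref{properties-of-maximal-extended-triangular-systems}: in case~(a) the absence of a minimum of $A_x$ gives elements of $A_x$ strictly below $i$, and in case~(b) the cut element $c_x = \min A_x$ satisfies $c_x \prec_x i$ (else $c_x = i$ would lie in $B_x$, contradicting $x \in R_i^c$) and serves as a canonical witness. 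Given this pointwise nonemptiness, a standard $\sigma$-additivity argument over the countable family of candidate $j$ produces a single $j$ for which $K_0 \cap C_j \cap S_{j,i}$ has positive measure; shrinking to a closed subset yields the required $K$. Case (\ref{cj-criterion}) follows by the symmetric argument. Without the maximality hypothesis on $(\bfS, \bfR, \bfC)$, the candidate sets could be empty at some $x$ and the construction would break down.
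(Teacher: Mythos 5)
Your proposal is correct and follows essentially the same route as the paper: each violated membership criterion is converted, via the Interpolation Theorem or Lemma~\ref{row-column-lemma}, into an off-diagonal projection $E(K)E_{i,j}$ lying in the generated algebra whose transpose $E(K)E_{j,i}$ already lies in $\tsrc$, with the auxiliary index located exactly as you describe by means of Lemma~\ref{properties-of-maximal-extended-triangular-systems}. The only differences are cosmetic: you fold the paper's two sub-cases (according as $K$ does or does not meet $\bigcup_j R_j\cap C_j$) into a single dichotomy on the cut $(A_x,B_x)$, and you should add the one-line reduction for $X\notin\alg\N$ (some $N^\perp X N\neq 0$ while $NX^*N^\perp\in\tsrc$), which the paper dispatches at the outset.
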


\begin{proof}
  Suppose $X\not\in \T(\bfS, \bfR, \bfC)$ and show that the algebra $\A$ generated by $X$
  and $\T(\bfS, \bfR, \bfC)$ is not triangular. Of course if $X\not\in\alg\N$, there is
  an $N\in\N$ such that $N^\perp XN\not=0$. Since $NX^*N^\perp\in\T(\bfS, \bfR, \bfC)$
  this would yield the desired result, so assume $X\in\alg\N$.

  Since $X\not\in\tsrc$, $X$ must fail to satisfy one of the three conditions of membership.
  If it fails the first one then there are $i,j$ such that $i_t(E_i X E_j) \not=0$ on a
  non-null subset of $S_{i,j}^c = S_{j,i}$. By upper semicontinuity of $i_t$ there is
  a closed non-null subset $K$ of $S_{j,i}$ and $a>0$ such that $i_t(E_iXE_j)\ge a$ for
  all $t\in K$. Thus by the Interpolation Theorem
  (Theorem~\ref{interpolation-theorem})
  there are $A=E_iAE_i$ and $B=E_jBE_j$
  in $\alg\N$ such that $AXB=E(K)E_{i,j}$ and, of course, $E(K)E_{j,i}\in\tsrc$ since
  $K\subseteq S_{j,i}$, contradicting triangularity of $\A$.

  Next suppose that $X$ fails the second condition. (The case where it fails the third
  condition is handled analogously.) Then there is an $i$ such that $r_{i,t}(X)\not=0$
  on a non-null subset of $R_i^c$. By upper semicontinuity of $r_{i,t}(X)$ as
  a function of $t$,
  there is a non-null closed subset $K$ of $R_i^c$ and $a>0$ such that $r_{i,t}(X)\ge a$
  for all $t\in K$. There are now two distinct cases to be considered.

  \textbf{Case 1:}
  Suppose that $K$ meets $\bigcup_{j\in\NN} R_j\cap C_j$  in a non-null set. In
  this case, replacing $K$ with a smaller non-null closed set we may assume that
  $K\subseteq R_j\cap C_j$ for some $j$.
  Of course, since $K$ is disjoint from $R_i$, we know $i\not= j$.
  By Lemma~\ref{row-column-lemma} there are
  $A=E_i A E_i$ and $B=BE_j$ in $\alg\N$ such that $AXB=E(K)E_{i,j}$ and in addition
  $c_{j,t}(B) \le i_t(B) = 0$ for all $t\not\in K$ (in particular, for all $t\not\in C_j$)
  and $i_t(E_a B E_b) = 0$ for all $t\in[0,1]$ and all $a,b\in\NN$.
  Thus $A, B\in\tsrc$ and so $E(K)E_{i,j}\in\A$. On the other hand,
  $K\subseteq R_i^c\subseteq C_i$
  (since by Lemma~\ref{properties-of-maximal-extended-triangular-systems},
  $R_i\cup C_i = [0,1]$)
  and so $K\subseteq R_j\cap C_i\subseteq S_{j,i}$ by the properties of
  extended triangular systems, and so $E(K)E_{j,i}\in \tsrc$.
  Thus $E(K)E_{i,j}\in\A\cap\A^*$ but, since $i\not= j$, it does not
  belong to the diagonal \masa\ $\N_0'\otimes\D_0$,
  contradicting triangularity of $\A$.

  \textbf{Case 2:}
  Suppose that $K \cap \bigcup_{j\in\NN} R_j\cap C_j = \emptyset$.
  (Possibly replacing $K$ with a subset to make this intersection empty
  and not just null.)
  For each $t\in K$ the induced Dedekind cut $(A_t, B_t)$ satisfies $A_t\cap B_t=\emptyset$
  and so by Lemma~\ref{properties-of-maximal-extended-triangular-systems},
  $A_t$ has no least element and $B_t$ has no greatest element.
  Since $t\not\in R_i$, this means $i\not\in B_t$ and so $i\in A_t$.
  Since $A_t$ has no least element there is a $j\in A_t$ with $j\preceq_t i$.
  Of course this $j$ depends on $t$ but by decomposing $K$ into a countable union
  over candidate values of $j$ we can find a non-null subset on which the same $j\in A_t$
  satisfies $j\preceq_t i$ for all $t$. Replacing $K$ with a closed non-null subset of
  this, we end up with $K\subseteq S_{j,i}$ and $K\subseteq C_j$.

  By Lemma~\ref{row-column-lemma} there are
  $A=E_i A E_i$ and $B=BE_j$ in $\alg\N$ such that $AXB=E(K)E_{i,j}$ and in addition
  $c_{j,t}(B) \le i_t(B) = 0$ for all $t\not\in K$
  and $i_t(E_a B E_b) = 0$ for all $t\in[0,1]$ and all $a,b\in\NN$.
  Thus $A,B\in\tsrc$ and so $E(K)E_{i,j}\in\A$. However we have arranged that
  $K\subseteq S_{j,i}$ so that $E(K)E_{j,i}\in\tsrc$, again contradicting triangularity
  for $\A$.
\end{proof}

\section{Examples}\label{examples-section}

In this section we will focus on the case where the induced order $\preceq_x$ and Dedekind cuts
$(A_x, B_x)$ are constant on $[0,1]$; in other words, the case where each $S_{i,j}$, $R_i$, and
$C_j$ is either $[0,1]$ or $\emptyset$. It should be borne in mind throughout that all the
behaviours described here can in general be mixed together when non-constant components are used.

To simplify things further, since there will be only one induced order, rather than indexing the
atoms by $\NN$ and adopting a secondary ordering $\preceq_x$, we will index the countable set
of atoms by some other ordered set (e.g., $\ZZ$, $\QQ$, etc.) and work with the natural ordering
from the indexing. Note that if $E_i$ are indexed by $i\in I$,
\[
  r_{i,t}(X) = \inf \{i_t(E_i X M^\perp_F)  \st \text{$F\subseteq I$ is finite}\}
\]
and
\[
  c_{j,t}(X) = \inf \{i_t(M^\perp_F X E_j)  \st \text{$F\subseteq I$ is finite}\}
\]
where $M_F : = \sum_{i\in F} E_i$, so that the values of $r_{i,t}$ and $c_{j,t}$ do not
depend on the ordering of the index set $I$.

\begin{example}\label{example-n}
  Let $E_i$ be indexed by $\NN$ so that $\T(\bfS)$ is the set of bounded infinite block matrices
  with entries from $\alg{\N_0}$ on and above the diagonal, and entries from $\R^\infty_{\N_0}$
  below the diagonal. The only possible maximal Dedekind cuts on $\NN$ are the pairs
  $A = [n, \infty)$, $B = [1, n]$ for $n = 1,2,\ldots$, together with $A = \emptyset$, $B = \NN$.
  The latter case corresponds to $C_i = \emptyset$ and $R_i = [0,1]$ for all $i$. Thus
  there is no asymptotic restriction on the rows in $\tsrc$, but $c_{j,t}(X)=0$ for all $j$
  and almost all $t$. It is easy to see that in fact each $X\in\tsrc$ satisfies
  $M_j^\perp X E_j \in \rinfty$ for all $j\in\NN$, since
  \[
    i_t(M_j^\perp XE_j) \le i_t(M_r^\perp XE_j) + \sum_{k=j+1}^r i_t(E_r X E_j)
  \]
  and integrating with respect to $t$ and applying the Dominated Convergence Theorem
  as $r\rightarrow\infty$ shows that $i_t(M_j^\perp XE_j) = 0$ almost everywhere.
  Thus, taking finite sums of columns we see that in this case $\tsrc$ coincides with
  the maximal triangular algebra of Theorem~\ref{tensor-product-construction-citation}.

  The other cases, however, are new, and consist of bounded infinite block matrices
  as before, with entries from $\alg{\N_0}$ on and above the diagonal and entries from
   $\R^\infty_{\N_0}$ below. However for, for some fixed $n\ge 1$, the first $n$
   rows have no other restrictions, but all rows after that have asymptotically a.e.\ zero
   diagonal support (i.e., $r_{i,t}(x) = 0$). The first $n-1$ columns must be in $\rinfty$
   but the rest have no asymptotic constraint.
\end{example}

\begin{example}\label{example-z}
  Let $E_i$ be indexed by $\ZZ$. In this case $\T(\bfS)$  is the set of bounded
  doubly infinite block operator matrices with entries from $\alg{\N_0}$ on and
  above the diagonal, and entries from $\R^\infty_{\N_0}$ below the diagonal.
  The following maximal Dedekind cuts are possible: (i)~$A=\emptyset$ and $B=\ZZ$;
  (ii)~$A=\ZZ$ and $B=\emptyset$; and (iii)~$A=[n, \infty)$ and $B=(-\infty, n]$ for
  $n\in\ZZ$.

  The first two cases bear a deceptive similarity to the algebras obtained by
  Theorem~\ref{tensor-product-construction-citation} and yet they are not the
  same. Theorem~\ref{tensor-product-construction-citation} gives us the algebra
  $\T$ of all doubly infinite block operator matrices satisfying
  $M_i^\perp X M_i \in\rinfty$ for all $i\in\ZZ$. However in our construction of
  $\tsrc$,  in case~(i), the lower half of each block column is in
  $\rinfty$ (by a similar argument to Example~\ref{example-n}) but the left-hand
  half of each row need not be. In case~(ii) the situation is reversed.

  Moreover
  in $\tsrc$ the asymptotic condition on the rows and the columns is two-sided,
  so that
  \[
    c_{j,t}(X) = 0 \Longleftrightarrow
    \lim_{i\rightarrow +\infty} i_t(M_i^\perp X E_j) =
    \lim_{i\rightarrow -\infty} i_t(M_i X E_j) = 0
  \]
  and similarly for $r_{i,t}(X)$. Thus in case~(i) each column is asymptotically
  zero a.e.\ on the diagonal, both approaching $-\infty$ and approaching
  $+\infty$, and case~(ii) is the same with the roles of rows and columns
  reversed. Case~(iii) is a blend of the two, in which for a fixed $n\in\ZZ$
  the block matrices are asymptotically zero-diagonal (a.e.) on the columns for
  $i<n$ and on the rows for $i>n$. Of course up to re-indexing this is really
  just a single case and we may as well take $n=0$.
\end{example}

\begin{example}\label{example-well-ordered}
  Let $E_i$ be indexed by any well-ordered set $S$.
  If $(A, B)$ is any Dedekind cut where $A$ is non-empty, then $A$ has a smallest
  element and the cut is of the form
  $A = \{i : i\ge a\}$,
  $B = \{i : i \le a\}$. The only other case is $A=\emptyset$, $B=S$.
\end{example}

\begin{example}\label{example-q}
  Let $E_q$ be indexed by $q\in\QQ$. This corresponds to the so-called
  Cantor Nest studied in \cite{Larson:NeAlSiTr}. In this case the maximal Dedekind
  cuts very naturally are either $A=[q, +\infty)$ and $B=(-\infty, q]$ for some
  $q\in\QQ$, or else $A=(\gamma, +\infty)$ and $B=(-\infty, \gamma)$ for an
  irrational $\gamma$. In addition the cases $A=\emptyset$ and $B=\QQ$,
  and $A=\QQ$, $B=\emptyset$ are possible.
\end{example}

As observed at the start of this section, the behaviours of these examples, and
indeed of any other linear orderings of the index set of the atoms $E_i$, can
be blended together at different values of $x\in[0,1]$. Purely for illustrative
purposes, we close this section with the construction of a maximal triangular
algebra which mixes the behaviours of the previous examples in a complex
fashion.

\begin{example}\label{very-complex-behavior-example}
  Let $F_i$ ($i\in\NN$) be a sequence of pairwise disjoint
  measure-dense subsets of $[0,1]$. (Measure-dense means that the set meets
  every non-empty open interval in a non-null set; see
  \cite[Lemma 3.1]{Orr:TrAlIdNeAl} for a construction.)
  Re-index the $F_i$ as $F_i^j$ for $i\in\NN$ and $j\in\{1, 2, 3, 4\}$. For
  $x\in F_i^j$ and $j=1,2,3,4$, let $\preceq_x$ be the ordering of the four
  examples, \ref{example-n}, \ref{example-z}, \ref{example-well-ordered}, and
  \ref{example-q} respectively.
  Now suppose that for each of the examples (indexed by $j$) we have enumerated the
  countable family of maximal Dedekind cuts described in that example
  as $(A^j_i, B^j_i)$ ($i\in\NN$) and adopt that cut for $x\in F_i^j$.
  By Lemma~\ref{properties-of-maximal-extended-triangular-systems},
  this induces a maximal extended triangular system $(\bfS,\bfR, \bfC)$
  and by Theorem~\ref{maximal-algebra-thm}, $\tsrc$ is a maximal triangular
  algebra, with extremely complex internal ordering structure!
\end{example}

\section{Characterizing simple uniform algebras}\label{characterizing-section}

In this section we shall study maximal triangular algebras satisfying
\begin{equation*}
  \alg{\N_0} \otimes \D_0 \subseteq \T \subseteq \alg{\N_0}\otimes B(\K)
\end{equation*}
and will identify conditions under which $\T$ is equal to $\tsrc$ for some extended
triangular system. We shall see in Proposition~\ref{triangular-contained-in-ts} that every
triangular algebra satisfying this condition is associated
with a \emph{nearly triangular system}, that is to say, a family of
sets satisfying all the properties of
Definition~\ref{definition-of-extended-triangular-system}
except the last one. From this, in order to show $\T=\tsrc$,
it will be enough to find conditions
which guarantee that the last property
(i.e., $R_i \cap C_j \subseteq S_{i,j}$ for all $i,j$) is satisfied.
In Theorems~\ref{contains-marginally-zero-operators-thm}
and~\ref{infinite-subsets-algebra-thm} we shall present two
necessary and sufficient criteria for $\T = \tsrc$.

First however we observe that all maximal triangular algebras lying between
$\alg{\N_0} \otimes \D_0$ and $\alg{\N_0}\otimes B(\K)$, whether they are
of the form $\tsrc$ or not, must contain $\rinfty$. This shows that, for
maximal triangular algebras of this type, all of the complexity of behavior
is to be found in the asymptotics at the boundary of the block matrix entries.

\begin{proposition}
  Let $\T$ be a maximal triangular algebra satisfying
  \[
    \alg{\N_0} \otimes \D_0 \subseteq \T \subseteq \alg{\N_0}\otimes B(\K)
  \]
  Then $\rinfty \subseteq \T$.
\end{proposition}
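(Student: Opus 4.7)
My plan is to prove $\rinfty \subseteq \T$ by showing that the set $\A := \T + \rinfty$ is itself a triangular algebra, whereupon maximality of $\T$ will force $\A = \T$. That $\A$ is closed under multiplication follows since $\rinfty$ is a two-sided ideal of $\alg{\N}$ and $\T \subseteq \alg{\N}$, so every cross-term in $(\T + \rinfty)(\T + \rinfty)$ lies in $\rinfty$. The diagonal $\A \cap \A^*$ contains $\T \cap \T^* = \D$, and since $\D$ is already a masa, triangularity of $\A$ will reduce to verifying $\A \cap \A^* \subseteq \D$.

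For $X \in \A \cap \A^*$, one has $X \in \alg{\N} \cap \alg{\N}^* = \N' = \N_0' \otimes B(\K)$, and the goal will be to show that every off-diagonal block $Y := E_i X E_j$ (with $i \neq j$) vanishes. Since $Y \in \N'$ and $Y = E_i Y E_j$, it must take the form $Y = \hat Y \otimes E_{ij}$ for some $\hat Y \in \N_0'$. Compressing the two decompositions $X = T + R = {T'}^* + {R'}^*$ (arising from $X \in \A$ and $X^* \in \A$) by $E_i$ on the left and $E_j$ on the right produces $Y = \hat T \otimes E_{ij} + \hat R \otimes E_{ij} = \hat{T'}^* \otimes E_{ij} + \hat{R'}^* \otimes E_{ij}$, where $\hat T \otimes E_{ij}$ and $\hat{T'} \otimes E_{ji}$ lie in $\T$, while $\hat R \otimes E_{ij}$ and $\hat{R'} \otimes E_{ji}$ lie in $\rinfty$, with all hats belonging to $\alg{\N_0}$.

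The heart of the argument runs as follows: every $R \in \rinfty$ satisfies $i_t(R) = 0$ almost everywhere, because otherwise the Interpolation Theorem applied at the ``hat'' level in $\alg{\N_0}$ would produce $A_0, B_0 \in \alg{\N_0}$ with $(A_0 \otimes E_i) R (B_0 \otimes E_j)$ equal to the non-zero element $E(K) \otimes E_{ij}$ of $\N'$, contradicting $\rinfty \cap \N' = 0$. Hence $i_t(\hat R) = i_t(\hat{R'}) = 0$ a.e., and subadditivity gives $i_t(\hat T) = i_t(\hat{T'}) = i_t(\hat Y)$ a.e. Supposing $\hat Y \neq 0$, upper semicontinuity of $i_t(\hat Y)$ will provide a closed non-null set $L \subseteq [0,1]$ and $a > 0$ on which $i_t(\hat T), i_t(\hat{T'}) \geq a$; two applications of the Interpolation Theorem in $\alg{\N_0}$ will then yield $A, B, A', B' \in \alg{\N_0}$ with $A \hat T B = A' \hat{T'} B' = E(L)$. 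Because $A \otimes E_i$, $B \otimes E_j$, $A' \otimes E_j$, $B' \otimes E_i$ all lie in the hypothesized $\alg{\N_0} \otimes \D_0 \subseteq \T$, tensoring will produce $E(L) \otimes E_{ij} = (A \otimes E_i) T (B \otimes E_j) \in \T$ and $E(L) \otimes E_{ji} = (A' \otimes E_j) T' (B' \otimes E_i) \in \T$, which are adjoints of one another. Hence $E(L) \otimes E_{ij} \in \T \cap \T^* = \D = \N_0' \otimes \D_0$, contradicting $E_{ij} \notin \D_0$ for $i \neq j$.

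I expect the main obstacle to be coordinating the two decompositions of $Y$ so that both interpolations can be carried out over a single common closed non-null set $L$, and in reassembling the adjoint pair $E(L) \otimes E_{ij}$ and $E(L) \otimes E_{ji}$ inside $\T$; the two essential inputs are the off-diagonal property of $\rinfty$ (which kills the $\hat R$ and $\hat{R'}$ contributions to the diagonal seminorm, leaving the $\T$-parts to inherit the full seminorm of $\hat Y$) and the hypothesized containment $\alg{\N_0} \otimes \D_0 \subseteq \T$ (which supplies the outer tensor factors needed to lift the multiplicity-free interpolation into $\T$).
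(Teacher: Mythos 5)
Your proposal follows the paper's own route: form $\A=\T+\rinfty$, invoke maximality, and reduce triangularity of $\A$ to showing that the off-diagonal blocks $E_iXE_j$ of an element of $\A\cap\A^*$ vanish, by transferring the diagonal seminorm from the block of $X$ to the $\T$-parts of its decompositions and then interpolating to manufacture $E(L)E_{i,j}$ and its adjoint inside $\T$ using the sandwiching operators from $\alg{\N_0}\otimes\D_0$. All of this matches the paper's proof; your bookkeeping with the two decompositions $X=T+R=(T')^*+(R')^*$ is a correct (and marginally more general, since you do not first reduce to self-adjoint elements) way to organize the same argument.

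One step does not work as written. You assert that if $\hat Y\neq 0$ then ``upper semicontinuity of $i_t(\hat Y)$ will provide a closed non-null set $L$ and $a>0$'' on which the seminorm is bounded below. Upper semicontinuity only makes the super-level sets closed; it does not make any of them non-null, and for a general non-zero operator $i_t$ can vanish identically (every element of $\rinfty$, e.g.\ any compact operator in $\alg\N$, does this). What rescues the step is that $\hat Y$ lies in the diagonal $\N_0'$, and a non-zero diagonal element cannot have $i_t$ almost everywhere zero. That is precisely the converse direction of the characterization of $\rinfty$: if $i_t(Z)\eqae 0$ then $Z\in\rinfty$, which is diagonal-disjoint, so $Z\in\N'$ forces $Z=0$. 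Your Interpolation-Theorem argument establishes only the forward direction ($R\in\rinfty\Rightarrow i_t(R)\eqae 0$); the converse is not a consequence of interpolation and must be imported separately --- the paper does so by citing \cite[Theorem 2.1]{Orr:TrAlIdNeAl}, and alternatively one can verify directly that a multiplication operator $M_f\in\N_0'$ satisfies $i_x(M_f)\ge|f(x)|$ at almost every $x$. With that fact supplied, the rest of your argument (the common set $L$, the two interpolations, and the adjoint pair $E(L)\otimes E_{i,j}$, $E(L)\otimes E_{j,i}$ in $\T$) closes correctly.
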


\begin{proof}
  Since $\T$ is a subalgebra of $\alg\N$ and $\rinfty$ is an ideal of $\alg\N$,
  $\T+\rinfty \subseteq \alg\N$ and is an algebra. We shall prove that $\T + \rinfty$
  is triangular and then by maximality $\T + \rinfty = \T$ and the result follows.

  Suppose on the contrary that there is a self-adjoint operator $T+R$ for $T\in\T$
  and $R\in\rinfty$ which is not in the \masa\ $\N_0''\otimes\D_0$. Nevertheless
  $T+R$ is in the diagonal of $\alg\N$, which is $\N' = \N_0''\otimes\B(\K)$ and so there
  must be some $i\not=j$ such that $E_i(T+R)E_j \not= 0$.

  Now $i_t(E_i(T+R)E_j)$ is not almost everywhere zero because if it were then
  by \cite[Theorem 2.1]{Orr:TrAlIdNeAl} $E_i(T+R)E_j$ would be in $\rinfty$ which is
  a diagonal-disjoint ideal of $\alg\N$ and does not have any non-zero elements
  of $\N'$. But $T+R$ is self-adjoint so that, again by
  \cite[Theorem 2.1]{Orr:TrAlIdNeAl}, $i_t(R) \eqae 0$, and so
  \[
    i_t(E_iTE_j) \eqae i_t(E_i(T+R)E_j) = i_t(E_j(T+R)E_i) \eqae i_t(E_jTE_i).
  \]
  Thus there is an $a>0$ and a non-null set $K$ such that
  $i_t(E_iTE_j)$ and $i_t(E_jTE_i)$ are both at least $a$ on $K$.
  By the Interpolation Theorem, there are $A, B, C$, and $D$ in $\alg\N$
  such that
  \[
    AE_iTE_jB = CE_jTE_iD = E(K) \not= 0
  \]
  Thus $E_i AE_iTE_jB E_{i,j} = E(K)E_{i,j}$
  and $E_j CE_jTE_iD E_{j, i} = E(K)E_{j, i}$. Since the operators
  $E_i AE_i$, $E_jB E_{i,j}$, $E_j CE_j$, and $E_iD E_{j, i}$
  all belong to $\alg{\N_0} \otimes \D_0 \subseteq \T$,
  it follows that $E(K)E_{i,j}$ and $E(K)E_{j, i}$ belong to $\T$,
  contradicting triangularity.
\end{proof}

The following two lemmas provide necessary technical tools for the theorems of
this section. Remark~\ref{interval-family-remark} below describes how these lemmas
are used in the sequel.

\begin{lemma}\label{sub-sum-lemma}
  Let $A_i$ be a bounded sequence of operators such that for each $i$
  $\lim_{j\rightarrow\infty}\| A_iA_j^* \| =
    \lim_{j\rightarrow\infty}\| A^*_iA_j \| = 0$.
  Then there is a subsequence $k(i)$ such that
  $\sum_{i=1}^\infty A_{k(i)}$ converges strongly.
  Moreover, given a sequence of infinite subsets $S_i$ of $\NN$,
  we can choose the subsequence so that each $k(i)\in S_i$.
\end{lemma}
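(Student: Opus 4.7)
The plan is to select the subsequence inductively to force rapid almost-orthogonality decay, and then deduce strong convergence via a Cotlar-type argument.

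For the selection, pick $k(1)\in S_1$ arbitrarily. Having chosen $k(1)<\cdots<k(n-1)$, observe that for each $j<n$ the hypotheses $\|A_{k(j)} A_m^*\|\to 0$ and $\|A_{k(j)}^* A_m\|\to 0$ as $m\to\infty$ imply that only finitely many $m$ violate the two inequalities $\|A_{k(j)} A_m^*\|\le 2^{-2(n-j)}$ and $\|A_{k(j)}^* A_m\|\le 2^{-2(n-j)}$. A finite union of these bad sets over $j<n$ is still cofinite in $\NN$, so it meets the infinite set $S_n$; choose $k(n)$ from this intersection with $k(n)>k(n-1)$. Writing $B_i:=A_{k(i)}$ and $M:=\sup_i \|B_i\|<\infty$, the resulting sequence satisfies $\|B_iB_j^*\|,\|B_i^*B_j\|\le 2^{-2|i-j|}$ for $i\ne j$.

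Second, I would apply a Cotlar--Knapp--Stein almost-orthogonality argument with weight $\gamma(0):=M$ and $\gamma(k):=2^{-|k|}$ for $k\ne 0$. Since $\|B_i B_j^*\|,\|B_i^* B_j\|\le\gamma(i-j)^2$ and $\sum_{k\in\ZZ}\gamma(k)<\infty$, the partial sums $T_N:=\sum_{i=1}^N B_i$ are uniformly norm-bounded.

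The main obstacle is extracting strong (not merely norm-bounded) convergence from the almost orthogonality. Writing $T_N^* T_N=\sum_{i=1}^N B_i^* B_i+\sum_{i\ne j,\ i,j\le N} B_i^* B_j$, the off-diagonal part has norm at most $\sum_{i\ne j} 2^{-2|i-j|}<\infty$, so $\|\sum_{i=1}^N B_i^* B_i\|$ is uniformly bounded in $N$. Monotonicity of this positive sum yields strong convergence of $\sum B_i^* B_i$, and in particular $\sum_i \|B_i x\|^2<\infty$ for every $x\in\H$. Combining this with the off-diagonal smallness through the estimate
\[
  \|T_N x-T_m x\|^2 \le \sum_{i=m+1}^N \|B_i x\|^2 + \|x\|^2\sum_{\substack{m+1\le i,j\le N \\ i\ne j}} 2^{-2|i-j|}
\]
shows $(T_N x)$ is Cauchy in $\H$ for each $x$, establishing the desired strong convergence.
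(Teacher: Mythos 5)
Your strategy is genuinely different from the paper's: the paper perturbs each selected $A_{k(i)}$ by a norm-summable error into the form $P_iA_{k(i)}Q_i$ with $(P_i)$ and $(Q_i)$ pairwise orthogonal spectral projections, after which strong convergence of the sum is immediate; you instead keep the operators intact and aim for quantitative almost-orthogonality plus Cotlar--Stein. That route can be made to work, but as written it has a genuine quantitative gap. Your selection yields $\|B_i^*B_j\|,\|B_iB_j^*\|\le 2^{-2|i-j|}$, and you then assert that the off-diagonal part of $T_N^*T_N$ has norm at most $\sum_{i\ne j}2^{-2|i-j|}<\infty$. That sum is not finite: for each fixed $i$ one has $\sum_{j\ne i}2^{-2|i-j|}\ge 1/4$, so the double sum over pairs $i\ne j$ with $i,j\le N$ grows linearly in $N$. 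Consequently your uniform bound on $\|\sum_{i\le N}B_i^*B_i\|$ is not established, and the same divergence defeats the final Cauchy estimate, where you would need $\sum_{m+1\le i\ne j\le N}2^{-2|i-j|}\rightarrow 0$ as $m\rightarrow\infty$; in fact that quantity is unbounded in $N$ for fixed $m$. A decay rate depending only on $|i-j|$ is exactly what Cotlar--Stein can exploit for the uniform bound on $\|T_N\|$, but it is too weak for the absolute summability of the cross terms that the remainder of your argument requires.

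The repair is small. At stage $n$ of the selection, demand $\|A_{k(j)}A_m^*\|\le\e_n$ and $\|A_{k(j)}^*A_m\|\le\e_n$ for all $j<n$, where $\sum_n n\e_n<\infty$ (e.g.\ $\e_n=4^{-n}$); the excluded set of $m$ is still finite, so $k(n)$ can still be taken in $S_n$. Since $\e_n\le 4^{-|j-n|}$ the Cotlar--Stein hypothesis is retained, giving $\sup_N\|T_N\|<\infty$, and now $\sum_{i\ne j}\|B_i^*B_j\|\le 2\sum_n (n-1)\e_n<\infty$ with tails tending to zero. With that, your bound on $\sum_i\|B_ix\|^2$ and your Cauchy estimate both go through verbatim. (You might note that the paper's orthogonal-projection argument avoids Cotlar--Stein entirely and is rather more elementary, at the cost of the spectral-projection bookkeeping in the induction.)
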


\begin{proof}
  Fix $\alpha_i > 0$ such that $\sum_i \alpha_i < \infty$.
  The result clearly follows if we can construct $k(i)$ and
  two sequences of pairwise orthogonal projections $P_i$, $Q_i$
  such that
  \[
    \| A_{k(i)} - P_iA_{k(i)}Q_i \| \le \alpha_i
  \]
  for all $i$. We shall do this inductively and,  to ease the induction step,
  we shall add the hypothesis that each $P_i$ and $Q_i$ satisfies
  $\lim_{j\rightarrow\infty}\|P_iA_j\|=\lim_{j\rightarrow\infty}\|A_jQ_i\|=0$.

  To start the induction, pick $k=k(1)$ in $S_1$ and take
  \[
    P_1 := E_{|A_k^*|}( \, [\alpha_1/2, \infty) \, ) \text{ and }
    Q_1 := E_{|A_k|}( \, [\alpha_1/2, \infty) \, )
  \]
  where $E_H$ denotes the spectral measure on $\RR$ for a self-adjoint
  operator $H$.
  Then clearly
  $\|P_1^\perp A_k\| = \|P_1^\perp |A_k^*|\,\| \le \alpha_1/2$ and
  $\|A_k Q_1^\perp\| = \| \, |A_k| Q_1^\perp \| \le \alpha_1/2$, so that
  \begin{equation}\label{bottom-corner-approx}
    \| A_k - P_1A_kQ_1 \|
      \le \| P_1^\perp A_k \| + \| A_k Q_1^\perp \|
      \le \alpha_1
  \end{equation}
  Notice also that
  $A_jQ_1 = A_jA_k^*A_k f(|A_k|)$ where $f(t)=1/t^2$ on $[\alpha_1/2,\infty)$
  and zero elsewhere. Thus $\lim_{j\rightarrow\infty}\|A_jQ_1\|=0$ and by
  a similar argument $\lim_{j\rightarrow\infty}\|P_1A_j\|=0$,
  so the induction hypotheses hold for $k=k(1)$.

  Next suppose that $k(i)\in S_i$, along with pairwise orthogonal $P_i, Q_i$,
  have been found to satisfy the induction hypotheses for $1\le i < n$.
  Let
  $P := (P_1 +\cdots + P_{n-1})^\perp$ and
  $Q := (Q_1 +\cdots + Q_{n-1})^\perp$.
  For each $j$ write $A'_j:=PA_jQ$.
  Clearly $P^\perp A_j$ and $A_j Q^\perp$ converge to zero in norm
  as $j\rightarrow\infty$, so
  $\| A_k - A'_k\| = \| A_k - PA_kQ \| < \alpha_n/2$
  for all sufficiently large $k$. Pick one such $k := k(n)\in S_n$.
  Let
  \[
    P_n := E_{|{A'_k}^*|}( \, [\alpha_n/4, \infty) \, ) \text{ and }
    Q_n := E_{|A'_k|}( \, [\alpha_n/4, \infty) \, )
  \]
  where clearly $P_n\le P$, $Q_n\le Q$ and, as with (\ref{bottom-corner-approx}),
  \begin{equation*}
    \| A'_k - P_nA_kQ_n \|
      = \| A'_k - P_nA'_kQ_n \|
      \le \| P_n^\perp A'_k \| + \| A'_k Q_n^\perp \|
      \le \alpha_n/2
  \end{equation*}
  Thus
  $\| A_k - P_nA_kQ_n \| \le \alpha_n$.

  Also, $\|A_j Q_n\| \le \|P^\perp A_j\| + \|PA_jQ\,Q_n\|$. The first term
  converges to zero so we must show $\|A'_jQ_n\|$ converges to zero.
  As before,
  $A'_j Q_n = A'_j {A'_k}^*A'_k f(|A'_k|)$ where
  $f(t)=1/t^2$ on $[\alpha_n/4, \infty)$ and zero elsewhere.
  Since
  \[
    \|A'_j{A'_k}^*\|
      \le \|A_j Q A_k^*\|
      \le \|A_j A_k^*\| + \sum_{i=1}^{n-1} \|A_jQ_i\|\|A_k\|
  \]
  and all the terms on the right converge to zero, it follows
  that $\lim_{j\rightarrow\infty}\|A'_jQ_n\|=0$
  and so $\lim_{j\rightarrow\infty}\|A_jQ_n\|=0$.
  By similar reasoning,  $\lim_{j\rightarrow\infty}\|P_nA_j\|=0$,
  which completes the induction.
\end{proof}

\begin{lemma}\label{linking-lemma}
  Let $a>0$ and let $(A_i)$, $(B_i)$ and $(D_i)$ be sequences of operators satisfying
  $\|A_iD_iB_i\|>a$ for all $i\in\NN$. Suppose further that
  $(A_i)$ converges strongly to zero,
  $(B_i)$ is bounded and
  the $(D_i)$ are compact and converge strong-* to zero.
  Then given $\e>0$ we can pick a subsequence $k(i)$ such that
  $D := \sum_{i=1}^\infty D_{k(i)}$ converges strongly,
  and
  \[
    \|A_{k(i)}DB_{k(i)}\| > (1-\e)a
  \]
  for all $i\in\NN$.
  Moreover, given a sequence of infinite subsets $S_i$ of $\NN$,
  we can choose the subsequence so that each $k(i)\in S_i$.
\end{lemma}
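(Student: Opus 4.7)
The plan is to extract a subsequence by a diagonal procedure so that, when $D := \sum_j D_{k(j)}$ is sandwiched between $A_{k(i)}$ and $B_{k(i)}$, all cross-terms $A_{k(i)} D_{k(j)} B_{k(i)}$ with $j \ne i$ become negligible. Since $\|A_i D_i B_i\| > a$, I first choose, for each $i$, unit vectors $x_i, y_i$ with $|\langle y_i, A_i D_i B_i x_i\rangle| > a$, and set $u_i := A_i^* y_i$ and $v_i := B_i x_i$, which form bounded sequences in $\H$. The desired estimate $\|A_{k(i)} D B_{k(i)}\| > (1-\epsilon) a$ will be obtained by lower-bounding the pairing $|\langle y_{k(i)}, A_{k(i)} D B_{k(i)} x_{k(i)}\rangle|$.

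Three convergence facts will drive the argument, all variants of the slogan ``a bounded SOT-null sequence converges uniformly to zero on precompact sets.'' (F1)~For any fixed compact $K$, $\|A_m K B_m\| \to 0$ as $m \to \infty$: with $M_B := \sup_i \|B_i\|$ the image $K(\{x : \|x\| \le M_B\})$ is precompact, and $A_m \to 0$ uniformly on it. (F2)~For any fixed vector $v$, $\|D_m v\| \to 0$, by the SOT half of $D_m \to 0$ in SOT$^*$. (F3)~For each fixed compact $D_i$, $\|D_i D_m^*\|$ and $\|D_i^* D_m\|$ tend to zero; rewriting them as $\|D_m D_i^*\|$ and $\|D_m^* D_i\|$, the precompact-image reasoning of (F1) applies again, since $D_m, D_m^* \to 0$ in SOT and $D_i, D_i^*$ are compact.

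Inductive extraction: given $k(1), \dots, k(n-1)$, I pick $k(n) \in S_n$ large enough to simultaneously guarantee (i)~$\sum_{j=1}^{n-1} \|A_{k(n)} D_{k(j)} B_{k(n)}\| < \epsilon a / 4$, achievable by (F1) with each $D_{k(j)}$ a fixed compact operator; (ii)~$|\langle u_{k(i)}, D_{k(n)} v_{k(i)}\rangle| < \epsilon a / 2^{n+1}$ for each $i < n$, achievable by (F2) applied to the fixed vector $v_{k(i)}$; and (iii)~the projection-based bounds required by the inductive step of Lemma~\ref{sub-sum-lemma}, which hold for large $m$ by (F3) so that the proof of that lemma can be run with each $k(n)$ constrained to lie in $S_n$. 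Since $S_n$ is infinite, all of these ``$k(n)$ large enough'' conditions can be met at once. Lemma~\ref{sub-sum-lemma} then delivers strong convergence of $D := \sum_i D_{k(i)}$.

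To finish, fix $i$ and decompose
\[
  \langle y_{k(i)}, A_{k(i)} D B_{k(i)} x_{k(i)}\rangle
    = \langle u_{k(i)}, D_{k(i)} v_{k(i)}\rangle
      + \sum_{j \ne i} \langle u_{k(i)}, D_{k(j)} v_{k(i)}\rangle.
\]
The leading term has modulus $> a$; the $j < i$ portion of the sum is bounded by $\sum_{j<i} \|A_{k(i)} D_{k(j)} B_{k(i)}\| < \epsilon a / 4$ by condition (i) at step $i$; and the $j > i$ portion by $\sum_{j > i} \epsilon a / 2^{j+1} < \epsilon a / 2$ by condition (ii) at each subsequent step. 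Hence $\|A_{k(i)} D B_{k(i)}\| > a(1 - 3\epsilon/4) > (1-\epsilon) a$. The main obstacle is a subtle asymmetry between (i) and (ii): while (F1) supplies operator-norm decay of $\|A_{k(n)} D_{k(j)} B_{k(n)}\|$, the transposed quantity $\|A_{k(i)} D_{k(n)} B_{k(i)}\|$ need \emph{not} vanish as $n \to \infty$ (the rank-one projections $e_n \otimes e_n^*$ with $A = B = I$ already refute it), and the proof goes through only because (ii) tracks the inner product against the specific vectors $u_{k(i)}, v_{k(i)}$ rather than an operator norm, thereby reducing the control to the elementary fact (F2).
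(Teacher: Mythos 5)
Your proof is correct and follows essentially the same route as the paper's: witness vectors for $\|A_iD_iB_i\|>a$, an inductive extraction in which past cross-terms are killed by compactness of the fixed $D_{k(j)}$ together with strong nullity of $(A_m)$, future cross-terms are killed by strong nullity of $(D_m)$ applied to the fixed vectors $v_{k(i)}$, and Lemma~\ref{sub-sum-lemma} supplying strong convergence of $D$. The only (harmless, arguably cleaner) difference is that you fold the Lemma~\ref{sub-sum-lemma} extraction into a single induction with $k(n)\in S_n$ chosen directly, whereas the paper runs two passes and must check its estimates survive the re-indexing.
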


\begin{proof}
  We shall choose the subsequence $k(i)$ inductively. In order to
  meet the constraints on $k(i)$ and $S_i$, we fix a sequence
  $m(i)$ of natural numbers which takes every value
  in $\NN$ infinitely many times. When choosing $k(i)$, we shall
  ensure that each $k(i)\in S_{m(i)}$ so that ultimately each
  $k^{-1}(S_i)$ will be infinite.
  After the sequence $k(i)$ has been chosen we shall
  apply Lemma~\ref{sub-sum-lemma}
  to $D_{k(i)}$ and $k^{-1}(S_i)$ to get a subsequence $D_{k(l(i))}$
  such that $\sum_i D_{k(l(i))}$ converges strongly and each
  $l(i)\in k^{-1}(S_i)$, so that $k(l(i))\in S_i$.

  By Banach-Steinhaus, the sequences are all bounded in
  norm; let $K$ bound all of them.
  To choose $k(i)$, for each $i$ we pick a unit vector $\xi_i$
  such that $\|A_iD_iB_i\xi_i\| > a$. We start the induction with
  an arbitrary $k(1)$ in $S_{m(1)}$ and then suppose the first
  $n-1$ values have already been chosen. Because the $D_i$ are
  compact, each $A_k D_{k(j)}$ converges to zero in norm as
  $k\rightarrow\infty$ and so
  \begin{equation}\label{why-sum-up-to-n-1-is-small}
    \sum_{j=1}^{n-1} \|A_k D_{k(j)} B_k\|
      \le K \sum_{j=1}^{n-1} \|A_k D_{k(j)}\|
      < \frac{\e a}{2}
  \end{equation}
  for all sufficiently large $k$.
  Likewise, for all sufficiently large $k$,
  \begin{equation}\label{why-sum-from-n-plus-1-is-small}
    \max_{1\le j < n}\|D_k B_{k(j)}\xi_{k(j)}\| < \frac{\e a}{2^{n+1} K}
  \end{equation}
  Choose $k(n)\in S_{m(n)}$ to satisfy both of
  these.

  With the subsequence $k(i)$ chosen in this way, observe
  that $(D_i)$ satisfies the hypotheses of Lemma~\ref{sub-sum-lemma} and
  so we can find a subsequence $D_{k(l(i))}$ of $D_{k(n)}$ such that
  $D := \sum_i D_{k(l(i))}$ converges strongly and, as outlined in the
  first paragraph, $k(l(i))\in S_i$ for all $i$. Now write $k' := k\circ l$
  and observe that
  \begin{align*}
    \|A_{k'(n)} DB_{k'(n)}\|
      &\ge \|A_{k'(n)} DB_{k'(n)} \xi_{k'(n)}\| \\
      &\ge \|A_{k'(n)} D_{k'(n)}B_{k'(n)} \xi_{k'(n)}\| \\
      &\qquad  - K \sum_{j=1}^{n-1} \| A_{k'(n)} D_{k'(j)} \| \\
      &\qquad  - K \sum_{j=n+1}^\infty \| D_{k'(j)}B_{k'(n)} \xi_{k'(n)}\|
  \end{align*}
  Recall we chose $k(n)$ so that
  \[
    \sum_{j=1}^{n-1} \|A_{k(n)} D_{k(j)}\| < \frac{\e a}{2K}
    \text{ and }
    \max_{1\le j < n} \|D_{k(n)} B_{k(j)}\xi_{k(j)}\| < \frac{\e a}{2^{n+1}K}
  \]
   and so, substituting $l(n)$ for $n$,
  \[
    \sum_{j=1}^{l(n)-1} \|A_{k'(n)} D_{k(j)}\| < \frac{\e a}{2K}
    \text{ and }
    \max_{1\le j < l(n)} \|D_{k'(n)} B_{k(j)}\xi_{k(j)}\| < \frac{\e a}{2^{l(n)+1}K}
  \]
  Since clearly $l(1),\ldots, l(n-1)$ are found among $1,2,\ldots,l(n)-1$
  it follows that
  \[
    \sum_{j=1}^{n-1} \|A_{k'(n)} D_{k'(j)}\| < \frac{\e a}{2K}
  \]
  and
  \[
    \max_{1\le j < n} \|D_{k'(n)} B_{k'(j)}\xi_{k'(j)}\|
      < \frac{\e a}{2^{l(n)+1}K}
      \le \frac{\e a}{2^{n+1}K}
  \]
  From this it is clear that
  $\|A_{k'(n)}DB_{k'(n)}\| > a - \e a/2 - \e a/2 = (1-\e)a$
  (exchanging the roles of $j$ and $n$ in the last term).
\end{proof}

\begin{remark}\label{interval-family-remark}
  In subsequent results, we will apply the following technique
  when we make use of Lemma~\ref{linking-lemma}.
  Suppose that $K$ is a non-null closed subset of $(0, 1)$.
  By the Cantor-Bendixon Theorem, we can find a perfect subset $K'$ of $K$
  which differs from $K$ by a countable, and hence null, set.
  Consider all the intervals of the form $((p-1)/q, (p+1)/q)$
  for natural numbers $p<q$ and let $(s_n, t_n)$ be an enumeration of
  all such intervals which contain a point of $K'$.
  Every element of $K'$ (and almost every element of the original $K$)
  belongs to an interval $(s_n, t_n)$ and every interval contains
  infinitely many points of $K$.
  For any $\e>0$,
  only finitely many $t_n-s_n$ are greater than $\e$ and so $t_n-s_n \rightarrow 0$
  and $N_{t_n} - N_{s_n}$ converges to zero strongly.
  Let $S_n := \{ m : (s_m, t_m) \subseteq (s_n, t_n)\}$
  and observe each $S_n$ is infinite since $(s_n, t_n)$
  contains infinitely many points of $K$ and so, given any $N$, we can
  choose $q$ large enough that there are at least $N$ pairwise disjoint
  intervals of the form $((p-1)/q, (p+1)/q)$ which contain elements
  of $(s_n, t_n)\cap K$. In arguments below we will apply
  Lemma~\ref{linking-lemma}, using operators $A_n$ and $B_n$
  which are derived from expressions involving $N_{t_n} - N_{s_n}$,
  and we will obtain subsequences $k(n)$ satisfying $k(n)\in S_n$ for
  all $n$. We will then focus on a fixed $x\in K'$ and interval $(s, t)$
  containing $x$. Then, clearly we can find $n$ such that
  $(s_n,t_n)\subseteq(s,t)$ so that also
  $(s_{k(n)},t_{k(n)})\subseteq(s,t)$ and
  $N_{t_{k(n)}} - N_{s_{k(n)}} \le N_t - N_s$. This will enable us to
  apply norm estimates involving $k(n)$ obtained from
  Lemma~\ref{linking-lemma} to general intervals containing $x\in K'$.
\end{remark}

\begin{proposition}\label{triangular-contained-in-ts}
  Let $\T$ be a triangular algebra satisfying
  \[
    \alg{\N_0} \otimes \D_0 \subseteq \T \subseteq \alg{\N_0}\otimes B(\K)
  \]
  Then there are families of Borel sets
  $\bfS = (S_{i,j})$,
  $\bfR = (R_i)$, and
  $\bfC = (C_j)$ satisfying properties
  (\ref{definition-of-extended-triangular-system--first-item}) to
  (\ref{definition-of-extended-triangular-system--second-to-last-item})
  of Definition~\ref{definition-of-extended-triangular-system} such that
  $\T\subseteq\tsrc$.
  Such a collection of sets will be called a \emph{nearly triangular system} for $\T$.
\end{proposition}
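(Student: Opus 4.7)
My plan is to define the three families as essential unions of the relevant support sets, then enlarge $\bfR$ and $\bfC$ to obtain the required closure properties (4) and (5). Let $S_{i,j}$ be a Borel representative of the essential union of $\{t\in[0,1]: i_t(E_iXE_j)>0\}$ as $X$ ranges over $\T$, and define $R_i'$ and $C_j'$ analogously using $r_{i,t}(X)$ and $c_{j,t}(X)$. These essential unions exist as Borel sets because each is realised by some countable subunion. By construction, for every $X\in\T$ the seminorms $i_t(E_iXE_j)$, $r_{i,t}(X)$, and $c_{j,t}(X)$ vanish almost everywhere outside $S_{i,j}$, $R_i'$, and $C_j'$ respectively.

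The analytic key, which I would prove first, is: if $X\in\T$ satisfies $i_t(E_iXE_j)\ge\alpha$ on a closed set $K$, then $E(K)E_{i,j}\in\T$. Since $E_i, E_j\in\alg{\N_0}\otimes\D_0\subseteq\T$, the operator $E_iXE_j$ lies in $\T$; moreover the inclusion $\T\subseteq\alg{\N_0}\otimes B(\K)$ forces $E_iXE_j = X_{i,j}\otimes(\e_i\otimes\e_j^*)$ for a unique $X_{i,j}\in\alg{\N_0}$ with $i_t(X_{i,j}) = i_t(E_iXE_j)\ge\alpha$ on $K$. Applying the Interpolation Theorem (Theorem~\ref{interpolation-theorem}) inside the continuous nest algebra $\alg{\N_0}$ produces $A_0,B_0\in\alg{\N_0}$ with $E(K)A_0 X_{i,j} B_0 E(K) = E(K)$; tensoring these with the rank-one projections $\e_i\otimes\e_i^*$ and $\e_j\otimes\e_j^*$ gives operators in $\alg{\N_0}\otimes\D_0\subseteq\T$, and triple-producting with $E_iXE_j$ yields $E(K)E_{i,j}\in\T$.

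With this tool properties (1)--(3) follow cleanly. Property (1) holds because $I\in\T$ and $i_t(E_i)=1$ everywhere. For (2), if $|S_{i,j}\cap S_{j,i}|>0$ for some $i\ne j$, upper semicontinuity of $i_t$ (Lemma~\ref{properties-of-i}) and the essential-union definition give $\alpha>0$, a closed non-null $K$, and $X,Y\in\T$ for which both $E(K)E_{i,j}$ and $E(K)E_{j,i}$ lie in $\T$; the self-adjoint element $E(K)(E_{i,j}+E_{j,i})$ then sits in $\T\cap\T^*$ but not in the diagonal masa $\N_0'\otimes\D_0$ (its $B(\K)$-component has nonzero off-diagonal matrix entries), contradicting triangularity. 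For (3), the product $E(K_1)E_{i,j}\cdot E(K_2)E_{j,k} = E(K_1\cap K_2)E_{i,k}\in\T$ (using $E_{i,j}E_{j,k}=E_{i,k}$) witnesses $K_1\cap K_2\subseteq S_{i,k}$ modulo null sets, since the diagonal seminorm of $E(K_1\cap K_2)E_{i,k}$ equals $1$ at density points of $K_1\cap K_2$.

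Properties (4) and (5) are not automatic from the essential-union definition, so I enlarge by setting
\[
R_i := R_i'\cup\bigcup_{j\in\NN}(S_{i,j}\cap R_j'), \qquad C_j := C_j'\cup\bigcup_{i\in\NN}(C_i'\cap S_{i,j}).
\]
These remain Borel and contain $R_i'$, $C_j'$, so the a.e.\ vanishing of $r_{i,t}(X)$ and $c_{j,t}(X)$ outside them is inherited, preserving $\T\subseteq\tsrc$. A short direct computation using the transitivity~(3) of $\bfS$ established above gives $C_i\cap S_{i,j}\subseteq C_j$ and $S_{i,j}\cap R_j\subseteq R_i$. A routine null-set adjustment at the end converts the a.e.\ identities~(1)--(3) into exact set-theoretic ones. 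The main obstacle is the analytic step producing $E(K)E_{i,j}\in\T$ from the Interpolation Theorem applied inside $\alg{\N_0}$; the rest is combinatorial bookkeeping.
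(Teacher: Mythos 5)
Your proof is correct, and while it opens the same way as the paper's (defining $\bfS$, $\bfR$, $\bfC$ as essential unions of the level sets of the seminorms, and using the Interpolation Theorem to manufacture $E(K)E_{i,j}\in\T$ for properties (\ref{definition-of-extended-triangular-system--reflexive}) and (\ref{definition-of-extended-triangular-system--antisymmetric})), it diverges substantially afterwards. For transitivity you simply multiply the two interpolated matrix units, $E(K)E_{i,j}\cdot E(K)E_{j,k}=E(K)E_{i,k}\in\T$, and read off $K\subseteq S_{i,k}$ at density points of $K$; the paper instead builds a block-diagonal linking operator $D$ via Lemma~\ref{linking-lemma} and Remark~\ref{interval-family-remark} so that $i_x(E_iXDYE_k)\ge a^2/2$ on $K$. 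Your route is genuinely simpler and valid here, essentially because the $S_{i,j}$ data is localized in single blocks where interpolation applies. The larger divergence is at properties (\ref{definition-of-extended-triangular-system--column}) and (\ref{definition-of-extended-triangular-system--row}): you obtain them for free by formally enlarging $R_i$ and $C_j$ to their closure under the $\bfS$-action, whereas the paper proves — again via the linking lemma, which is unavoidable there since $r_{i,t}$ and $c_{j,t}$ are asymptotic and cannot be interpolated to a matrix unit — that the essential-union sets \emph{themselves} already satisfy these inclusions. Both establish the proposition as stated, but your $\bfR$ and $\bfC$ are potentially strictly larger than the canonical ones. This matters downstream: the proofs of Theorems~\ref{contains-marginally-zero-operators-thm} and~\ref{infinite-subsets-algebra-thm} pass from ``$R_i\cap C_j\setminus S_{i,j}$ is non-null'' to the existence of $X,Y\in\T$ with $r_{i,t}(X)>a$ and $c_{j,t}(Y)>a$ on a non-null set, which is exactly the essential-union characterization your enlarged sets no longer enjoy. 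So your argument buys a shorter proof of this proposition at the cost of a weaker conclusion than the paper actually uses later; to support the sequel you would still need the paper's analytic argument that the minimal sets satisfy (\ref{definition-of-extended-triangular-system--column}) and (\ref{definition-of-extended-triangular-system--row}).
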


\begin{proof}
  Note that it's enough to prove each of the relations of
  Definition~\ref{definition-of-extended-triangular-system} to within a null
  set. For, after sets have been found to satisfy the relations to within null sets,
  simply form the union of all the excess sets
  $S_{i,j}\cap S_{j,i}$,
  $(S_{i,j}\cap S_{j,k})\setminus S_{i,k}$,
  $(C_i \cap S_{i,j})\setminus C_j$, and
  $(S_{i,j} \cap R_j)\setminus R_i$ for $i,j,k\in\NN$ and remove this
  null set from each of the individual sets $S_{i,j}$, $R_i$, and $C_j$.
  Now all the required properties
  hold exactly, except possibly $S_{i,i} = [0,1]$, and so finally
  enlarge the $S_{i,i}$ by a null set to equal $[0,1]$, which does
  not alter the validity of the other relations.

  Recall from elementary Measure Theory that in any $\sigma$-finite measure
  space, given a (not necessarily countable)
  collection of sets $S_\alpha$ ($\alpha\in A$), we can find a measurable set
  $S := \essunion_{\alpha\in A} S_\alpha$ called the \emph{essential union} of the
  family having the property that
  $S_\alpha\setminus S$ is null for all $\alpha$, and for any measurable $K$,
  if $K\cap S_\alpha$ is null for all $\alpha$ then $K\cap S$ is also null.
  The set is not unique but is unique to within a null set, and we shall
  assume an arbitrary choice has been made to assign a concrete value to
  $\essunion_{\alpha\in A} S_\alpha$.

  For each $i,j\in\NN$ let
  \begin{align*}
    S_{i,j} &:= \essunion_{X\in\T, a>0} \{t \st i_t(E_iXE_j) \ge a \} \\
    R_{i}   &:= \essunion_{X\in\T, a>0} \{t \st r_{i,t}(X) \ge a \} \\
    C_{j}   &:= \essunion_{X\in\T, a>0} \{t \st c_{j,t}(X) \ge a \}
  \end{align*}
  Clearly $\T\subseteq\tsrc$; it remains to
  show $\bfS, \bfR, \bfC$ is a nearly triangular system, at least to within null
  sets.

  Property~(\ref{definition-of-extended-triangular-system--reflexive})
  is trivial, since $\T$ is unital.

  Suppose Property~(\ref{definition-of-extended-triangular-system--antisymmetric})
  does not hold. Then for some $i\not=j$, $S_{i,j}\cap S_{j,i}$ is non-null and so
  there must be $X,Y\in\T$ and $a>0$ such that
  $i_t(E_iXE_j)$ and $i_t(E_jYE_i)$ are at least $a$ on a non-null set $K$.
  Thus  $i_t(E_iXE_{j,i})\ge a$ on $K$
  (since $i_t(E_iXE_{j,i}) \ge i_t(E_iXE_{j,i}E_{i,j}) = i_t(E_iXE_j)$)
  and so by the Interpolation Theorem (Theorem~\ref{interpolation-theorem})
  there are $A, B$ in $\alg\N$ such that $AE_iXE_{j, i}B=E(K)$. Thus
  $E_iAE_iXE_{j, i}BE_{i,j}=E(K)E_{i,j}$ which must be in $\T$, since
  $E_iAE_i$ and $E_{j, i}BE_{i,j}$ are in $\alg{\N_0}\otimes\D_0$. But
  by the same argument applied to $Y$, $E(K)E_{j,i}$ must be $\T$, which
  would contradict triangularity.

  Suppose Property~(\ref{definition-of-extended-triangular-system--transitive})
  does not hold. Then for some $i,j,k$, $S_{i,j} \cap S_{j,k}\setminus S_{i,k}$
  is non-null and there must be $X,Y\in\T$ and $a>0$ such that
  $i_t(E_iXE_j)$ and $i_t(E_jYE_k)$ are greater than $a$ on a non-null set $K$
  which is disjoint from $S_{i,k}$.
  Choose intervals $(s_n, t_n)$ and subsets $S_n$ of $\NN$ as in
  Remark~\ref{interval-family-remark}. Take
  $A_n := (N_{t_n}-N_{s_n})E_iXE_j$ and
  $B_n := E_jYE_k(N_{t_n}-N_{s_n})$.
  Note that $t_n-s_n\rightarrow 0$, so that $A_n$ and $B_n$ converge
  strongly to $0$.
  For each $n$ pick $s_n<x<y<z<t_n$ where $x$ and $z$ are in $K$.
  Since
  $\|(N_y-N_{s_n})E_iXE_j(N_y-N_{s_n})\|$ and
  $\|(N_{t_n}-N_y)E_jYE_k(N_{t_n}-N_y)\|$
  are both greater than $a$, we can find a rank-1 contraction
  $D_n= (N_y-N_{s_n})E_jD_nE_j(N_{t_n}-N_y)$ such that
  $\|A_nD_nB_n\|> a^2$. Thus by Lemma~\ref{linking-lemma} there is
  a subsequence $k(n) \in S_n$ such that $D := \sum_{n=1}^\infty D_{k(n)}$
  converges strongly, and
  \[
    \|(N_{t_{k(n)}}-N_{s_{k(n)}})E_iXDYE_k(N_{t_{k(n)}}-N_{s_{k(n)}})\|\ge a^2/2.
  \]
  By Remark~\ref{interval-family-remark}, for almost every $x\in K$,
  if $(s, t)$ contains $x$ then
  we can find an $n$ such that $(s_{k(n)}, t_{k(n)})\subseteq (s,t)$
  so that also
  $\|(N_t-N_s)E_iXDYE_k(N_t-N_s)\| \ge a^2/2$.
  Thus $i_x(E_iXDYE_k)\ge a^2/2$ for almost all $x\in K$.
  However each $D_n\in E_j\alg\N E_j$, so that
  $D\in E_j\alg\N E_j\subseteq \T$ and so $XDY\in\T$. This contradicts
  the fact that $K$ is disjoint from $S_{i,k}$ and so
  Property~(\ref{definition-of-extended-triangular-system--transitive})
  must hold.

  The proofs of Properties~(\ref{definition-of-extended-triangular-system--column})
  and~(\ref{definition-of-extended-triangular-system--row}) are similar
  to each other, so we present only the first.
  Suppose Property~(\ref{definition-of-extended-triangular-system--column})
  does not hold. Then there are $i$ and $j$ such that
  $C_i \cap S_{i,j} \setminus C_j$ is non-null. As before, find
  $X,Y\in\T$, $a>0$, and a non-null set $K$ disjoint from $C_j$
  on which $c_{i,t}(X)$ and $i_t(E_iYE_j)$ are greater than $a$.
  As before, choose intervals $(s_n, t_n)$ and subsets $S_n$ of $\NN$ according to
  Remark~\ref{interval-family-remark}. As usual let $M_n := E_1 + \cdots + E_n$.
  Let
  $A_n := M_n^\perp(N_{t_n}-N_{s_n})XE_i$ and
  $B_n := E_iYE_j(N_{t_n}-N_{s_n})$.
  Note that $A_n, B_n\rightarrow 0$ strongly. For each $n$, as in the previous case,
  we can find $s_n<x<y<z<t_n$ with $x, z\in K$. Thus
  \[
    \|(N_y-N_{s_n})A_n(N_y-N_{s_n})\|
    \text{ and }
    \|(N_{t_n}-N_y)B_n(N_{t_n}-N_y)\|
  \]
  are both greater than $a$ and so there is a finite-rank contraction
  $D_n = (N_y-N_{s_n})E_i D_n E_i(N_{t_n}-N_y)$ such that
  $\|A_nD_nB_n\|>a^2$. Clearly $D_n$ converges strong-* to zero.
  By Lemma~\ref{linking-lemma} we find $k(n)\in S_n$ such that
  $D := \sum_{n=1}^\infty D_{k(n)}$
  converges strongly, and
  \[
    \|M^\perp_{k(n)}(N_{t_{k(n)}}-N_{s_{k(n)}})XDYE_j(N_{t_{k(n)}}-N_{s_{k(n)}})\|\ge a^2/2
  \]
  for all $n$.
  Now, for almost any $x\in K$, given any open interval $(s,t)$ which contains
  $x$, and any $n_0\in\NN$, we can find $n\ge n_0$ such that
  $(s_n, t_n)\subseteq(s,t)$. Then, since $n_0\le n\le k(n)\in S_n$
  we know $(s_{k(n)}, t_{k(n)})\subseteq(s,t)$ and $M^\perp_{k(n)}\le M^\perp_{n_0}$
  so that also
  \[
    \|M^\perp_{n_0}(N_t-N_s)XDYE_j(N_t-N_s)\|\ge a^2/2
  \]
  Thus $c_{j, x}(XDY)\ge a^2/2$ for almost every $x\in K$ and yet, since
  $D\in E_i\alg\N E_i\subseteq\T$, we have $XDY\in\T$,
  contradicting the fact that $K$ is disjoint from $C_j$. So
  Property~(\ref{definition-of-extended-triangular-system--column})
  must hold.
\end{proof}

\begin{theorem}\label{contains-marginally-zero-operators-thm}
  Let $\T$ be a maximal triangular algebra satisfying
  \[
    \alg{\N_0} \otimes \D_0 \subseteq \T \subseteq \alg{\N_0}\otimes B(\K)
  \]
  and let $(\bfS, \bfR,\bfC)$ be a nearly triangular system for $\T$.
  Then $\T = \tsrc$ and $(\bfS, \bfR,\bfC)$ is an extended triangular system
  if and only if
  $\T$ contains all $X\in\T(\bfS)$ such that
  for each $m\in\NN$ there is an $n\in\NN$ such that
  $E_m X M_n^\perp = 0$ and $M_n^\perp X E_m=0$.
\end{theorem}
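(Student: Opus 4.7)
The statement is an `iff', with the forward direction essentially immediate. If $X\in\T(\bfS)$ satisfies the hypothesis, then for each $m$ some $n$ gives $E_mXM_n^\perp=M_n^\perp XE_m=0$, forcing $r_{m,t}(X)=c_{m,t}(X)=0$ for every $t$. Thus $X$ meets criteria~(\ref{ri-criterion}) and~(\ref{cj-criterion}) of Definition~\ref{define-tsrc} trivially, and together with $X\in\T(\bfS)$ lies in $\tsrc=\T$. For the converse, Proposition~\ref{triangular-contained-in-ts} already supplies $\T\subseteq\tsrc$ and properties (\ref{definition-of-extended-triangular-system--first-item})--(\ref{definition-of-extended-triangular-system--second-to-last-item}) of Definition~\ref{definition-of-extended-triangular-system}, so the task reduces to establishing the remaining property (\ref{definition-of-extended-triangular-system--row-column}), $R_i\cap C_j\subseteq S_{i,j}$; once that is in hand, Theorem~\ref{extended-triangular-systems-make-tsrc-triangular} makes $\tsrc$ triangular and maximality of $\T$ upgrades $\T\subseteq\tsrc$ to equality.

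I would argue (\ref{definition-of-extended-triangular-system--row-column}) by contradiction, modeled on the proofs of properties (\ref{definition-of-extended-triangular-system--transitive})--(\ref{definition-of-extended-triangular-system--row}) in Proposition~\ref{triangular-contained-in-ts}. A putative failure together with the essential-union definitions of $R_i$ and $C_j$ supplies $X,Y\in\T$, a constant $a>0$, and a closed non-null $K\subseteq R_i\cap C_j\setminus S_{i,j}$ with $r_{i,t}(X)\ge a$ and $c_{j,t}(Y)\ge a$ on $K$. The goal is to construct $D\in\T$ so that $XDY\in\T$ carries $i_t(E_iXDYE_j)>0$ on a non-null subset of $K$, contradicting $\T\subseteq\tsrc$. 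Whereas in the proofs of (\ref{definition-of-extended-triangular-system--transitive})--(\ref{definition-of-extended-triangular-system--row}) it sufficed to place $D$ in a single diagonal block $E_k\alg\N E_k\subseteq\alg{\N_0}\otimes\D_0$, that is impossible here because the mass of $X$'s row $i$ and $Y$'s column $j$ is asymptotically in the tail. This is exactly where the hypothesis is used: I will take $D=\sum_n\gamma_n'\otimes\delta_n'^*$ with the $\gamma_n'$ in pairwise disjoint finite row-block sets $F_n$ and the $\delta_n'$ in pairwise disjoint finite column-block sets $G_n$. Disjointness gives $D$ finite block support in every row and column, and each nonzero block entry of $D$ is rank-one, hence has $i_t\equiv 0$ in the continuous nest; so $D\in\T(\bfS)$ automatically and $D\in\T$ by hypothesis.

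The construction itself combines Remark~\ref{interval-family-remark} with Lemma~\ref{linking-lemma}. Enumerate the intervals $(s_n,t_n)$ about the perfect kernel of $K$ together with the associated infinite subsets $S_n\subseteq\NN$, and for each $n$ select $s_n<y_n^X<y_n<y_n^Y<t_n$ with $y_n^X,y_n^Y\in K$. The bounds $i_{y_n^X}(E_iXM_k^\perp)\ge a$ and $i_{y_n^Y}(M_k^\perp YE_j)\ge a$, applied on the sub-intervals $(s_n,y_n)$ and $(y_n,t_n)$, yield unit vectors $\gamma_n\in(N_{y_n}-N_{s_n})M_{K_n^*}^\perp$ and $\delta_n\in(N_{t_n}-N_{y_n})M_{L_n^*}^\perp$ (for any thresholds $K_n^*,L_n^*$) realising $\|(N_{y_n}-N_{s_n})E_iX\gamma_n\|\ge a/2$ and $\|(N_{t_n}-N_{y_n})E_jY^*\delta_n\|\ge a/2$. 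With $\gamma_n\in N_{y_n}$ and $\delta_n\in N_{y_n}^\perp$, $\gamma_n\otimes\delta_n^*$ lies in $\alg\N$, and setting $A_n:=(N_{t_n}-N_{s_n})E_iX$, $B_n:=YE_j(N_{t_n}-N_{s_n})$ gives $\|A_n(\gamma_n\otimes\delta_n^*)B_n\|\ge a^2/4$. Choosing $K_n^*,L_n^*$ inductively past the cumulative maximum of previously chosen $F_m,G_m$ and approximating $\gamma_n,\delta_n$ by normalized projections $\gamma_n':=M_{F_n}\gamma_n/\|M_{F_n}\gamma_n\|$, $\delta_n':=M_{G_n}\delta_n/\|M_{G_n}\delta_n\|$ with $F_n,G_n$ suitably large finite subsets of $\{K_n^*+1,K_n^*+2,\ldots\}$ preserves the norm bound up to arbitrarily small loss while forcing the $F_n$ and the $G_n$ pairwise disjoint. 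Since $A_n,B_n\to 0$ strongly and the rank-one operators $D_n':=\gamma_n'\otimes\delta_n'^*$ converge strong-$\ast$ to zero, Lemma~\ref{linking-lemma} applied with the subsets $S_n$ produces a subsequence $k(n)\in S_n$ so that $D:=\sum_n D_{k(n)}'$ converges strongly and $\|A_{k(n)}DB_{k(n)}\|\ge(1-\e)a^2/8$; the transfer argument of Remark~\ref{interval-family-remark} then delivers $i_x(E_iXDYE_j)\ge(1-\e)a^2/8$ for almost every $x$ in the perfect kernel of $K$, the sought contradiction.

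The main difficulty throughout is the mismatch between the ``tail-mass'' vectors $\gamma_n,\delta_n$, which naturally spread across infinitely many blocks, and the hypothesis's demand for finite block support in every row and every column. Inductively placing the tail thresholds $K_n^*,L_n^*$ past all earlier $F_m,G_m$, together with a routine norm-preserving finite-block approximation, is what reconciles the two.
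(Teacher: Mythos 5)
Your proposal is correct and follows essentially the same route as the paper's proof: necessity is the trivial direction, and sufficiency reduces via Proposition~\ref{triangular-contained-in-ts} and maximality to verifying $R_i\cap C_j\subseteq S_{i,j}$ by contradiction, using Remark~\ref{interval-family-remark} and Lemma~\ref{linking-lemma} to build a linking operator $D$ whose block support is finite in every row and column, so that the stated hypothesis forces $D\in\T$ and $XDY$ violates the disjointness of $K$ from $S_{i,j}$. The only difference is bookkeeping: you enforce pairwise disjoint finite block supports $F_n$, $G_n$ for rank-one summands, whereas the paper takes finite-rank $D_n$ supported in $M_n^\perp M_k$ (i.e., in blocks past $n$) so that each $E_m$ meets only finitely many summands — both yield the same membership of $D$ in $\T(\bfS)$ with the required finite-support property.
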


\begin{proof}
  Necessity is trivial, since all $X\in\T(\bfS)$ which satisfy the condition
  must satisfy $r_{i,t}(X)=c_{j,t}(X)=0$ for all $t$. We focus now
  on the converse.
  By the maximality of $\T$ it suffices, in view of
  Theorem~\ref{extended-triangular-systems-make-tsrc-triangular} and
  Proposition~\ref{triangular-contained-in-ts},
  to show that in this case
  property~(\ref{definition-of-extended-triangular-system--row-column})
  of Definition~\ref{definition-of-extended-triangular-system} also holds.
  Suppose for a contradiction that
  property~(\ref{definition-of-extended-triangular-system--row-column})
  does not hold. Then for some $i$ and $j$, $R_i\cap C_j \setminus S_{i,j}$
  is non-null. Thus we can find operators $X,Y\in\T$ and $a>0$ such that
  $r_{i,t}(X) > a$ and $c_{j,t}(Y) > a$ on a non-null set $K$ which is disjoint
  from $S_{i,j}$.
  Let the intervals $(s_n, t_n)$ and sets $S_n$ be chosen as in
  Remark~\ref{interval-family-remark}.

  For each fixed $n$, pick
  $s_n<x<y<z<t_n$ where $x$ and $z$ are in $K$. Let
  $A_n := E_i (N_{t_n}-N_{s_n})X$ and
  $B_n := Y(N_{t_n}-N_{s_n})E_j$.
  Clearly $A_n$ and $B_n$ converge strongly to zero.
  Also,
  $\|A_n M_n^\perp N_y\| \ge \| E_i (N_y-N_{s_n})X(N_y-N_{s_n}) M_n^\perp \| \ge r_{i,x}(X) > a$
  and, similarly
  $\|M_n^\perp N_y^\perp B_n\| > a$.
  Thus we can find a finite-rank
  contraction $D_n$ such that $\|A_n D_n B_n\| > a^2$, which satisfies
  $D_n = M_n^\perp N_y D_n N_y^\perp M_n^\perp$, and consequently belongs to $\alg\N$.
  By weak lower-semicontinuity of the norm, we can also stipulate
  that $D_n = M_k D_n M_k$ for some sufficiently large $k$.

  By Lemma~\ref{linking-lemma}, there is a subsequence $k(n)\in S_n$ such that
  $D := \sum_n D_{k(n)}$ converges strongly and
  \[
    \|
      E_i(N_{t_{k(n)}}-N_{s_{k(n)}})X
      D
      Y (N_{t_{k(n)}}-N_{s_{k(n)}}) E_j
    \| > a^2/2.
  \]
  Thus by Remark~\ref{interval-family-remark},
  for almost every $x\in K$, if the open interval $(s, t)$ contains $x$
  then there is an $n$ such that $(s_{k(n)}, t_{k(n)})\subseteq (s,t)$
  and so
  $\|(N_t-N_s) E_iXDYE_j (N_t-N_s)\| > a^2/2$.
  Thus $i_x(E_i XDY E_j) \ge a^2/2$ for almost every $x\in K$.
  Furthermore, $D$ is in $\T$ since for each $m$, $D E_m$ and
  $E_m D$ are finite sums of $D_kE_m$'s and $E_mD_k$'s respectively.
  So for sufficiently large $n$,
  $D E_m = M_n D E_m$ and $E_m D = E_m D M_n$. Since also each
  $D E_m$ and $E_m D$ are finite-rank, $D$ is in $\T(\bfS)$ and hence
  also in $\T$ by hypothesis.

  The result follows, since the fact $XDY\in\T$ and $i_x(E_i XDY E_j) \ge a^2/2$
  almost everywhere on $K$ together contradict the assumption that $K$ is
  disjoint from $S_{i,j}$.
\end{proof}

One way to interpret the last result is to start with a maximal triangular
algebra $\T$ satisfying the inclusion relation and first calculate
the triangular system $\bfS$ such that $\T\subseteq\T(\bfS)$ and then form
the extended triangular system $(\bfS,\bfR_0,\bfC_0)$
where $\bfR_0=\bfC_0=\emptyset$. Then $\T(\bfS,\bfR_0,\bfC_0)$ is
a triangular algebra contained in $\T(\bfS)$. We have just seen that
$\T$ is simple if and only if it contains $\T(\bfS,\bfR_0,\bfC_0)$.

The following example is of a maximal triangular which is \emph{not} simple,
and illustrates one way simplicity can fail; if the algebra has
different asymptotic behavior at infinity on different infinite subsets
of the indexing set.

\begin{example}\label{non-simple-uniform-alegbra-example}
  Let $E_i$ be indexed by $i\in\ZZ$ and $M_n := \sum_{m\le n}E_m$.
  By Theorem~\ref{tensor-product-construction-citation} the set, $\T$, of
  $X\in\alg\N$ such that $M_n^\perp XM_n\in\rinfty$ for all $n$
  is a maximal triangular algebra.
  Note
  $R_i = C_j = [0,1]$ for all $i,j$ and so $R_i\cap C_j\not\subseteq S_{i,j}$
  for $i > j$, and $(\bfS, \bfR, \bfC)$ is not an extended triangular system.
  Note also however that every $X\in\T$ satisfies
  $r_{i,x}(XM_0) = c_{j,x}(M_0^\perp X) = 0$ almost everywhere, so that
  the support sets along rows and columns are very different when localized
  to this projection $M_0$ or to its complement:
  \[
    \bigvee_{X\in\T, a>0} \{ t \st r_{i,t}(XM_0) \ge a\} \not=
    \bigvee_{X\in\T, a>0} \{ t \st r_{i,t}(XM_0^\perp) \ge a\}
  \]
  and
  \[
    \bigvee_{X\in\T, a>0} \{ t \st c_{j,t}(M_0^\perp X)\ge a \} \not=
    \bigvee_{X\in\T, a>0} \{ t \st c_{j,t}(M_0X)\ge a \}
  \]
\end{example}

In the following result we establish the converse: if the diagonal seminorms
asymptotically have the same support sets when localized to any infinite set
along the rows and columns then the algebra is a simple uniform algebra.

\begin{definition}
  For any $S\subseteq\NN$ write $M_S := \sum_{i\in S} E_i$ and define
  \[
    r^\infty_{i, t}(X) :=
      \inf\{ i_t(E_iXM_S) \st S\subseteq\NN \text{ is infinite } \}
  \]
  and
  \[
    c^\infty_{j, t}(X) :=
      \inf\{ i_t(M_SXE_j) \st S\subseteq\NN \text{ is infinite } \}
  \]
  Then for each $i,j\in\NN$ let
  \begin{align*}
    R^\infty_{i}   &:= \essunion_{X\in\T, a>0} \{t \st r^\infty_{i,t}(X) \ge a \} \\
    C^\infty_{j}   &:= \essunion_{X\in\T, a>0} \{t \st c^\infty_{j,t}(X) \ge a \}
  \end{align*}
\end{definition}

\begin{theorem}\label{infinite-subsets-algebra-thm}
  Let $\T$ be a triangular algebra satisfying
  \[
    \alg{\N_0} \otimes \D_0 \subseteq \T \subseteq \alg{\N_0}\otimes B(\K)
  \]
  and let $(\bfS, \bfR,\bfC)$ be a nearly triangular system for $\T$.
  Then $\T=\tsrc$ and $(\bfS, \bfR,\bfC)$ is an extended triangular system
  if and only if
  \[
    C_j = C^\infty_j \text{ and }
    R_i = R^\infty_i
  \]
  to within a null set for all $i$ and $j$.
\end{theorem}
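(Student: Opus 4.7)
For necessity, assume $\T=\tsrc$ with $(\bfS,\bfR,\bfC)$ an extended triangular system. The inclusions $C^\infty_j\subseteq C_j$ and $R^\infty_i\subseteq R_i$ to within null sets are immediate from $c^\infty_{j,t}(X)\le c_{j,t}(X)$ and $r^\infty_{i,t}(X)\le r_{i,t}(X)$. For the reverse inclusions I would, for each $t_0\in C_j$, construct a witness operator $X\in\tsrc$ with $c^\infty_{j,t}(X)$ bounded below on a neighborhood of $t_0$. The extended triangular system structure at $t_0$ provides enough indices $i$ with $t_0\in S_{i,j}$ to build $X$ as a strongly convergent sum of rank-one operators spread across rows, arranged so that removing any infinite subset of rows still leaves residual $i_t$-mass at almost every point of the neighborhood; this construction is in the spirit of the one in the proof of Proposition~\ref{ts-is-a-triangular-space}.

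For sufficiency, assume $C_j=C^\infty_j$ and $R_i=R^\infty_i$ to within null sets. It suffices to verify Property~(\ref{definition-of-extended-triangular-system--last-item}) of Definition~\ref{definition-of-extended-triangular-system}, $R_i\cap C_j\subseteq S_{i,j}$: once this is in hand, Theorem~\ref{extended-triangular-systems-make-tsrc-triangular} makes $\tsrc$ a triangular algebra containing $\T$, and maximality of $\T$ forces $\T=\tsrc$. Arguing by contradiction, suppose $(R_i\cap C_j)\setminus S_{i,j}$ is non-null. Using the hypothesis, upper semicontinuity of $r^\infty_{i,t}$ and $c^\infty_{j,t}$ in $t$ (each being an infimum of upper semicontinuous functions), and a routine countable sub-selection, one reduces to operators $X,Y\in\T$, a constant $a>0$, and a closed non-null set $K\subseteq(R_i\cap C_j)\setminus S_{i,j}$ with $r^\infty_{i,t}(X), c^\infty_{j,t}(Y)>a$ on all of $K$.

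I then adapt the construction in the proof of Theorem~\ref{contains-marginally-zero-operators-thm}, using the stronger $r^\infty, c^\infty$ hypothesis to arrange the linking rank-ones in pairwise disjoint blocks. Fix pairwise disjoint infinite sets $T_n\subseteq\NN$; by Remark~\ref{interval-family-remark} choose intervals $(s_n,t_n)$ each meeting $K$ along with $s_n<x_n<y_n<z_n<t_n$ and $y_n\in K$. From $r^\infty_{i,y_n}(X)>a$ applied to the infinite set $T_n$, extract unit vectors $\alpha_n\in E_i(N_{y_n}-N_{x_n})\H$ and $\beta_n\in M_{T_n}(N_{y_n}-N_{x_n})\H$ with $|\langle X\beta_n,\alpha_n\rangle|>a-2^{-n}$; symmetrically, from $c^\infty_{j,y_n}(Y)>a$ applied to $T_n$, obtain $\gamma_n\in M_{T_n}(N_{z_n}-N_{y_n})\H$ and $\delta_n\in E_j(N_{t_n}-N_{z_n})\H$ with $|\langle Y\delta_n,\gamma_n\rangle|>a-2^{-n}$. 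Set $D_n:=\beta_n\otimes\gamma_n^*$, a rank-one operator in $M_{T_n}\alg{\N}M_{T_n}\cap\rinfty$. Apply Lemma~\ref{linking-lemma} with $A_n:=E_i(N_{t_n}-N_{s_n})X$, $B_n:=Y(N_{t_n}-N_{s_n})E_j$ and these $D_n$---its hypotheses are verified essentially as in Theorem~\ref{contains-marginally-zero-operators-thm}, with pairwise orthogonality of the $\beta_n$ and $\gamma_n$ across disjoint $T_n$ forcing strong-$*$ convergence of $D_n$ to zero---to produce a subsequence $k(n)\in S_n$ with $D:=\sum_n D_{k(n)}$ strongly convergent and $\|A_{k(n)}DB_{k(n)}\|>a^2/2$. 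By Remark~\ref{interval-family-remark}, $i_x(E_i XDY E_j)\ge a^2/2$ for almost every $x\in K$.

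The main obstacle is showing $D\in\T$, for then $XDY\in\T\subseteq\tsrc$ carries non-zero $i_x$-mass in the $E_i\cdot E_j$ corner on a non-null set disjoint from $S_{i,j}$, yielding the required contradiction. Crucially, the pairwise disjointness of the $T_n$ makes $E_m D$ and $D E_m$ each at most rank-one, so $D$ is a marginally zero operator in $\T(\bfS)$. I would close the argument by showing that the hypothesis $C_j=C^\infty_j$, $R_i=R^\infty_i$ implies the marginally-zero hypothesis of Theorem~\ref{contains-marginally-zero-operators-thm}---namely that every marginally zero operator of $\T(\bfS)$ already lies in $\T$---at which point invoking that theorem directly gives $\T=\tsrc$ together with the extended triangular system property. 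Verifying this implication is the technical crux: one leverages the flexibility provided by the infinite-subset liminal seminorms to show that adjoining any marginally zero element of $\T(\bfS)$ to $\T$ cannot destroy triangularity, so that maximality of $\T$ forces that element to belong to $\T$ already.
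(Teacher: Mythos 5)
Your sufficiency argument diverges from the paper's at exactly the point where the real work lies, and the divergence leaves a genuine gap. You build the linking operator $D=\sum_n\beta_{k(n)}\otimes\gamma_{k(n)}^*$ out of rank-one operators supported in the infinite blocks $M_{T_n}$; such a $D$ is not block diagonal, so it does not lie in $\alg{\N_0}\otimes\D_0$ and there is no a priori reason for it to lie in $\T$. You acknowledge this and propose to finish by showing that $C_j=C^\infty_j$ and $R_i=R^\infty_i$ force every ``marginally zero'' element of $\T(\bfS)$ into $\T$, so that Theorem~\ref{contains-marginally-zero-operators-thm} applies. But that implication is, via Theorem~\ref{contains-marginally-zero-operators-thm} itself, equivalent to the conclusion $\T=\tsrc$ you are trying to prove, and you give no independent argument for it --- the sentence about ``leveraging the flexibility of the infinite-subset liminal seminorms'' is precisely the missing proof, and Example~\ref{non-simple-uniform-alegbra-example} shows that containment of such operators can genuinely fail. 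The paper escapes this circle by exploiting the hypothesis uniformly over \emph{all} infinite subsets at once: since $\|A_nN_yM_S\|>a$ and $\|M_SN_y^\perp B_n\|>a$ for every infinite $S\subseteq\NN$, the Ramsey-theoretic Theorem~\ref{infinite-subseteq-estimate-thm} yields a \emph{block diagonal} contraction $D_n$ with $\|A_nN_yD_nN_y^\perp B_n\|\ge a^2/5$; block diagonality places each $D_n$, and hence $D=\sum_nD_{k(n)}$ after Lemma~\ref{linking-lemma} and weak closure, in $\alg{\N_0}\otimes\D_0\subseteq\T$ unconditionally. Your extraction from a single infinite set $T_n$ per step discards exactly the uniformity that makes the block diagonal choice possible, so the membership $D\in\T$ is never established.

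The necessity direction has the right idea --- exhibit a witness $T=TE_j\in\tsrc$ with $c^\infty_{j,t}(T)$ bounded below on a non-null $K\subseteq C_j\setminus C^\infty_j$ --- but your sketch omits the construction that makes it work, and the one concrete ingredient you cite (``enough indices $i$ with $t_0\in S_{i,j}$'') is a red herring. The paper's witness is a triangular array $T=\sum_m\sum_{n\le m}R_{m,n}$ of rank ones placed in pairwise disjoint measure-dense sets: each block $E_mTE_j$ is finite rank, so $i_t(E_mTE_j)\equiv0$ and membership in $\T(\bfS)$ (indeed in $\tsrc$) holds with no reference to $\bfS$ whatsoever, while every row $m>n$ carries a unit-norm rank one inside $(s_n,t_n)$, so deleting any infinite set of rows still leaves norm at least $1$ in every interval meeting $K$. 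Spelling out an arrangement with that last property is the whole content of necessity, and it is absent from your proposal.
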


This theorem relies on the main result of \cite{Orr:EsNoPrInBlOpMa} in which we used
techniques of Infinite Ramsey Theory to prove the following:

\begin{theorem}[\cite{Orr:EsNoPrInBlOpMa}, Theorem 1.2]\label{infinite-subseteq-estimate-thm}
  Let $X,Y\in\bh$ and suppose that $\|XM_S\| > 1$ and $\|M_S Y\| > 1$
  for all infinite $S\subseteq\NN$. Then there is a block diagonal contraction
  $D$ such that $\|XDY\| \ge 1/5$.
\end{theorem}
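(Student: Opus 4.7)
My plan is to prove the biconditional in two directions, with the reverse direction being the main content and invoking Theorem~\ref{infinite-subseteq-estimate-thm}. For the forward direction, the inclusions $R^\infty_i \subseteq R_i$ and $C^\infty_j \subseteq C_j$ a.e.\ hold for any $\T$, since taking $S = \NN \setminus \{1, \ldots, k\}$ in the infimum defining $r^\infty_{i,t}(X)$ and letting $k \to \infty$ gives $r^\infty_{i,t}(X) \le r_{i,t}(X)$, and analogously for $c$. For the reverse inclusions under $\T = \tsrc$, I would construct, for any non-null $K_0 \subseteq R_i$, an operator $Y \in \tsrc$ with $r^\infty_{i,t}(Y) > 0$ on a positive-measure subset of $K_0$: pick orthonormal unit vectors $\xi_k$ localized in shrinking nest intervals just below a countable dense sequence $\{t_n\} \subseteq K_0$ and $\eta_k \in E_k$ localized just above, yielding rank-1 pieces $\xi_k \otimes \eta_k^* \in \alg{\N_0} \otimes \D_0 \subseteq \tsrc$. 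The assembled contraction $Y$ has its $i$-th row spread across every column and localized near $\{t_n\}$, so every infinite-column restriction retains full norm on narrow intervals around points of positive density.

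For the reverse direction, assume $R_i = R^\infty_i$ and $C_j = C^\infty_j$ a.e. Since $\T \subseteq \tsrc$ is automatic, it suffices to verify Property~(\ref{definition-of-extended-triangular-system--row-column}) of Definition~\ref{definition-of-extended-triangular-system}: $R_i \cap C_j \subseteq S_{i,j}$ a.e. Suppose for contradiction that $K := R_i \cap C_j \setminus S_{i,j}$ is non-null. Using the hypothesis and countable decomposition of the essential-union representations, pass to a closed non-null $K' \subseteq K$ on which $r^\infty_{i,t}(X) > a$ and $c^\infty_{j,t}(Y) > a$ for fixed $X, Y \in \T$ and some $a > 0$. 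Set up the interval family $(s_n, t_n)$ and subsets $S_n \subseteq \NN$ of Remark~\ref{interval-family-remark}, pick $s_n < u_n < y_n < z_n < v_n < t_n$ with $u_n, v_n \in K'$, and define
\[
  X_n := (N_{y_n} - N_{s_n}) E_i X (N_{y_n} - N_{s_n}), \qquad
  Y_n := (N_{t_n} - N_{z_n}) Y E_j (N_{t_n} - N_{z_n}).
\]
Since $M_S$ commutes with the nest, the hypotheses give $\|X_n M_S\| \ge a$ and $\|M_S Y_n\| \ge a$ for every infinite $S \subseteq \NN$. Theorem~\ref{infinite-subseteq-estimate-thm}, applied to $X_n/a$ and $Y_n/a$, produces a block-diagonal contraction $\widehat D_n$ with $\|X_n \widehat D_n Y_n\| \ge a^2/5$.

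The key refinement is the nest-sandwich $D_n := (N_{y_n} - N_{s_n}) \widehat D_n (N_{t_n} - N_{z_n})$. Each block $E_k D_n E_k$ equals $\bigl((N_{y_n} - N_{s_n}) T_k (N_{t_n} - N_{z_n})\bigr) \otimes (\e_k \otimes \e_k^*)$ and sends the range of the ``later'' projection $(N_{t_n} - N_{z_n})$ into the ``earlier'' range $(N_{y_n} - N_{s_n})$, which trivially preserves every $N_c$; hence $D_n \in \alg{\N_0} \otimes \D_0 \subseteq \T$. Using $X, Y \in \alg\N$, a direct nest-algebra calculation (the edge terms vanish because $(N_{t_n} - N_{s_n}) N_{s_n} = 0$ and $(N_{t_n} - N_{z_n}) N_{z_n} = 0$) collapses $(N_{t_n} - N_{s_n}) E_i X D_n Y E_j (N_{t_n} - N_{s_n})$ exactly to $X_n \widehat D_n Y_n$, preserving the $a^2/5$ estimate. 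After approximating each $\widehat D_n$ by a finite-rank contraction (to secure compactness) and exploiting the nested structure of the $S_n$ to arrange asymptotic orthogonality, Lemma~\ref{linking-lemma} applied with $A_n := (N_{t_n} - N_{s_n}) E_i X$, $B_n := Y E_j (N_{t_n} - N_{s_n})$ yields a subsequence $k(n)$ and strong sum $D := \sum_n D_{k(n)} \in \T$ with $\|A_{k(n)} D B_{k(n)}\| \ge a^2/10$. The interval technique of Remark~\ref{interval-family-remark} then gives $i_x(E_i XDY E_j) \ge a^2/10$ a.e.\ on $K'$, contradicting $K' \cap S_{i,j}$ being null. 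With Property~(\ref{definition-of-extended-triangular-system--row-column}) established, $(\bfS, \bfR, \bfC)$ extends to a maximal extended triangular system, $\tsrc$ is maximal triangular by Theorem~\ref{maximal-algebra-thm}, and $\T = \tsrc$ by maximality.

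The principal obstacle is coordinating three requirements on $D_n$: membership in $\T$ (via the nest-sandwich), preservation of the norm estimate $a^2/5$ (via the upper-triangularity of $X, Y$), and compactness together with asymptotic orthogonality across $n$ (via finite-rank approximation and the nested-interval structure) so that Lemma~\ref{linking-lemma} assembles them into a global $D \in \T$. Theorem~\ref{infinite-subseteq-estimate-thm} furnishes a block-diagonal contraction without intrinsic nest structure, so the main novelty here is recovering nest-compatibility while keeping the norm bound intact.
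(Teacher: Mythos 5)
Your proposal does not prove the statement in question. The statement is Theorem~\ref{infinite-subseteq-estimate-thm} itself: a bare Hilbert-space estimate asserting that if $\|XM_S\|>1$ and $\|M_SY\|>1$ for every infinite $S\subseteq\NN$, then some block diagonal contraction $D$ satisfies $\|XDY\|\ge 1/5$. This is a statement about arbitrary $X,Y\in\bh$ and the projections $M_S$; it makes no reference to nests, triangular systems, or the sets $R_i$, $C_j$, $S_{i,j}$. What you have written instead is a proof sketch of Theorem~\ref{infinite-subsets-algebra-thm} (the characterization $\T=\tsrc \Leftrightarrow R_i=R_i^\infty,\ C_j=C_j^\infty$), and in the course of that sketch you explicitly invoke Theorem~\ref{infinite-subseteq-estimate-thm} as a black box (``Theorem~\ref{infinite-subseteq-estimate-thm}, applied to $X_n/a$ and $Y_n/a$, produces a block-diagonal contraction $\widehat D_n$\dots''). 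The existence of that contraction is precisely what you were asked to establish, so the entire content of the target statement is assumed rather than proved.

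For context: the paper does not prove this theorem either; it cites it from \cite{Orr:EsNoPrInBlOpMa}, where it is established using techniques of Infinite Ramsey Theory. A genuine proof attempt would have to engage with that combinatorial core --- roughly, using Ramsey-type selection of infinite subsets of $\NN$ to align the columns of $X$ carrying norm with the rows of $Y$ carrying norm, so that a single diagonal contraction can link them with only a bounded constant ($1/5$) of loss. None of that machinery, nor any substitute for it, appears in your proposal. Your discussion of the nest-sandwich $D_n := (N_{y_n}-N_{s_n})\widehat D_n(N_{t_n}-N_{z_n})$, the application of Lemma~\ref{linking-lemma}, and Remark~\ref{interval-family-remark} is a reasonable reconstruction of the sufficiency half of Theorem~\ref{infinite-subsets-algebra-thm}, but it answers a different question.
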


\begin{proof}[Proof of Theorem~\ref{infinite-subsets-algebra-thm}]
  We shall first prove necessity, so suppose $\T=\tsrc$.
  Clearly $C_j \supseteq C^\infty_j$ and
  $R_i \supseteq R^\infty_i$ for all $i$ and $j$. Suppose if possible that
  there is a non-null closed set $K\subseteq C_j\setminus C^\infty_j$.
  Find a sequence of countably many pairwise disjoint measure-dense
  subsets of $[0,1]$ (see \cite[Lemma 3.1]{Orr:TrAlIdNeAl} for a construction)
  and index them as $F_{m,n}$ for $m,n\in\NN$. Let $(s_n, t_n)$ be an enumeration
  of all the intervals with rational endpoints which contain a point of $K$.
  For each fixed $m$, let $n$ run through $\NN$ and pick a rank-1 operator $R_{m,n}$
  of unit norm satisfying
  \[
    R_{m,n} =
      E_m (N_x - N_{s_n})E(F_{m,n}) R_{m,n} E(F_{m,n})(N_{t_n} - N_x) E_j
  \]
  for some $x\in K$ with $s_n<x<t_n$. Let
  $T := \sum_{m=1}^\infty\sum_{n=1}^m R_{m,n}$, which converges strongly since
  the ranges and domains of the $R_{m,n}$ are pairwise orthogonal.
  Clearly $T=T E_j\in\alg\N$ and for any $m$, $E_mTE_j$ is finite rank, so $i_x(E_m TE_n)=0$
  for all $m,n$ and all $x\in[0,1]$. Likewise $r_{m,x}(T)\le i_x(E_mTE_j)=0$ for all
  $m$ and $x$.
  If $x\not\in K$ then there is $s<x<t$ such that $(s,t)$ is disjoint from $K$
  and so $(N_t-N_s)R_{m,n}(N_t-N_s)=0$ for all $m,n$ and hence $(N_t-N_s)T(N_t-N_s)=0$.
  Thus $c_{j,x}(T)\le i_x(TE_j)=0$ for $x\not\in K$, so that $T\in\tsrc=\T$.
  However for any fixed $x\in K$ and $s<x<t$, find $n$ such that
  $(s_n, t_n)\subseteq(s, t)$ and observe that for any $m>n$
  \begin{align*}
    \|E_m(N_t-N_s) & T (N_t-N_s)E_j\| \\
      & \ge \|E_m (N_{t_n} - N_{s_n})E(F_{m,n}) T E(F_{m,n})(N_{t_n} - N_{s_n}) E_j\| \\
      & \ge \|E_m (N_{t_n} - N_{s_n})E(F_{m,n}) R_{m,n} E(F_{m,n})(N_{t_n} - N_{s_n}) E_j\| \\
      & = \|R_{m,n}\| = 1
  \end{align*}
  Thus if $S\subseteq\NN$ is infinite then
  $\|M_S(N_t-N_s) T (N_t-N_s)E_j\| \ge 1$ and so $c^\infty_{j,x}(T)\ge 1$ on $K$,
  contradicting the assumption that $K$ was
  disjoint from $C^\infty_j$. It follows by contradiction that $C_j = C^\infty_j$ and
  the fact that $R_i = R^\infty_i$ follows similarly.

  We now prove sufficiency. As in Theorem~\ref{contains-marginally-zero-operators-thm},
  it is enough to
  prove that property~(\ref{definition-of-extended-triangular-system--row-column})
  of Definition~\ref{definition-of-extended-triangular-system} holds.
  Suppose for a contradiction that
  property~(\ref{definition-of-extended-triangular-system--row-column})
  does not hold. Then for some $i$ and $j$, $R^\infty_i\cap C^\infty_j \setminus S_{i,j}$
  is non-null. Thus we can find operators $X,Y\in\T$ and $a>0$ such that
  $r^\infty_{i,t}(X) > a$ and $c^\infty_{j,t}(Y) > a$ on a non-null set $K$ which is disjoint
  from $S_{i,j}$.
  Let the intervals $(s_n, t_n)$ and sets $S_n$ be chosen as in
  Remark~\ref{interval-family-remark}.

  As usual, for each fixed $n$, pick
  $s_n<x<y<z<t_n$ where $x$ and $z$ are in $K$. Let
  $A_n := E_i (N_{t_n}-N_{s_n})X M_n^\perp$ and
  $B_n := M_n^\perp Y(N_{t_n}-N_{s_n})E_j$.
  Clearly $A_n$ and $B_n$ converge strongly to zero.
  Also, for any infinite $S\subseteq\NN$,
  \[
    \|A_n N_y M_S\|
      \ge \| E_i (N_y-N_{s_n})X(N_y-N_{s_n}) M_{S\cap (n, \infty)} \|
      \ge r^\infty_{i,x}(X)
      > a
  \]
  and, similarly,
  $\|M_S N_y^\perp B_n\| > a$.
  Thus, by Theorem~\ref{infinite-subseteq-estimate-thm} there is an infinite
  block diagonal contraction  $D_n$ such that
  $\|A_n N_y D_n N_y^\perp B_n\| \ge a' := a^2/5$.
  Finite rank operators are weakly dense in the set of infinite block diagonals
  so by weak lower semicontinuity of the norm, $D_n$ can be assumed to be finite
  rank.
  Without loss, also take $D_n=M_n^\perp N_y D_n N_y^\perp M_n^\perp$ so that
  $D_n$ is in $\alg{\N_0}\otimes\D_0$ and converges strong-* to zero.

  The proof now completes exactly as
  Theorem~\ref{contains-marginally-zero-operators-thm}.
  By Lemma~\ref{linking-lemma},
  there is a subsequence $k(n)\in S_n$ such that $D := \sum_n D_{k(n)}$ converges
  strongly and
  \[
    \|
      E_i(N_{t_{k(n)}}-N_{s_{k(n)}})X
      D
      Y (N_{t_{k(n)}}-N_{s_{k(n)}}) E_j
    \| > a'/2.
  \]
  Thus by Remark~\ref{interval-family-remark},
  for almost every $x\in K$, if the open interval $(s, t)$ contains $x$
  then there is an $n$ such that $(s_{k(n)}, t_{k(n)})\subseteq (s,t)$
  and so
  $\|(N_t-N_s) E_iXDYE_j (N_t-N_s)\| > a'/2$.
  Thus $i_x(E_i XDY E_j) \ge a'/2$ for almost every $x\in K$.
  Furthermore, since each $D_n$ is in $\alg{\N_0}\otimes\D_0$
  which is weakly closed, it follows that
  $D\in\alg{\N_0}\otimes\D_0\subseteq\T$.

  The result then follows, since the fact $XDY\in\T$ and $i_x(E_i XDY E_j) \ge a/2$
  almost everywhere on $K$ together contradict the assumption that $K$ is
  disjoint from $S_{i,j}$.

\end{proof}

%
%


\bibliography{bibliography}
\bibliographystyle{plain}

\end{document}